\theoremstyle{plain}
\newtheorem{theorem}{Theorem}[section]
\newtheorem{corollary}[theorem]{Corollary}
\newtheorem{assumption}{Assumption}
\newcommand{\be}{\begin{eqnarray}}
\newcommand{\ee}{\end{eqnarray}}
\newcommand{\ben}{\begin{eqnarray*}}
\newcommand{\een}{\end{eqnarray*}}
\begin{document}

\title{Statistical Inference for Covariate-Adaptive Randomization Procedures}
\author{Wei~Ma,~Yichen~Qin,~Yang~Li,~and~Feifang~Hu\thanks{
Wei Ma is Assistant Professor at Renmin University of China.  
Yichen Qin is Assistant Professor at University of Cincinnati.  
Yang Li is Professor at Renmin University of China.  
Feifang Hu (feifang@gwu.edu) is Professor at George Washington University.  
This work is partially supported by grant DMS-1612970 from the 
National Science Foundation and by grant 11801559, 11371366, 11731011 and 71771211 from the 
National Natural Science Foundation of China.}}
\date{}
\maketitle

\begin{abstract}
Covariate-adaptive randomization procedures are frequently used in comparative studies to 
increase the covariate balance across treatment groups.  
However, because randomization inevitably uses the 
covariate information when forming balanced treatment groups, 
the validity of classical statistical methods after such
randomization is often unclear.
In this article, we derive the theoretical properties of 
statistical methods based on 
general covariate-adaptive randomization under the linear model framework. 
More importantly, we explicitly unveil the relationship between 
covariate-adaptive and inference properties by deriving the asymptotic representations of the corresponding estimators.  
We apply the proposed general theory to various randomization procedures such 
as complete randomization (CR), rerandomization (RR), pairwise 
sequential randomization (PSR), and Atkinson's $D_A$-biased coin 
design ($D_A$-BCD) and compare their performance analytically.
Based on the theoretical results, we then propose a 
new approach to obtain valid and more powerful tests. These results open a door to 
understand and analyze experiments based on covariate-adaptive 
randomization. Simulation studies provide further evidence of the advantages of the proposed 
framework and the theoretical results.
Supplementary materials for this article are available online.

\textbf{Keywords:} Balancing covariates;
Conservative tests;
Power;
Sequential analysis;
Asymptotic normality.
\end{abstract}

\section{Introduction}\label{sec:Introduction}

Randomization is considered the ``gold standard'' to evaluate the treatment 
effect because it mitigates selection bias and provides a foundation for statistical 
inference.
Among the randomization methods, covariate-adaptive randomization (CAR) procedures are frequently used because they use the covariate information to form more 
balanced treatment groups.
However, this feature usually compromises
the validity of classical statistical inference following such
randomization.
In this article, we establish a general theory by which 
properties of statistical inference can be obtained for CAR under widely satisfied conditions.

CAR procedures have been studied extensively.
For categorical covariates, stratified randomization, 
Pocock and Simon's minimization method, and 
its extensions can be used to reduce covariate imbalance at different levels
\citep{Pocock1975,Hu2012} and can handle continuous 
covariates via discretization.
To avoid information loss due to discretization, many randomization methods 
that make direct use of continuous covariates have also been proposed 
\citep{Frane1998,Lin2012,Ma2013}.
Atkinson's $D_A$-biased coin design ($D_A$-BCD) represents a large 
class of methods that accounts for covariates in allocation rules based on certain optimality 
criteria \citep{Atkinson1982,Smith1984,Smith1984b,Antognini2011}.
When all units' covariates are available before the experiment begins, 
we can adopt rerandomization (RR), which repeats the traditional 
randomization process until a satisfactory configuration is achieved 
\citep{Morgan2012}.
Another alternative is pairwise sequential randomization (PSR), recently proposed by 
\citet{Qin2016}, which achieves the optimal covariate 
balance and has greater computational efficiency.
For an overview, please see 
\citet{Hu2014} and \citet{Rosenberger2015}.

Because the aforementioned randomizations inevitably use the covariate 
information to form more balanced treatment groups, 
the subsequent statistical inference is usually affected 
and demonstrates undesirable properties, such as reduced type I errors
\citep{Shao2010, Ma2015}.
This phenomenon of conservativeness is particularly common for a working model that 
includes only a subset of covariates used in randomization, such as the two-sample t-test.
As all of the covariates are used in the randomization to generate more 
balanced assignments, a valid statistical procedure should incorporate all covariates.
Therefore, excluding some covariates from the working model leads to distortion of the sampling distribution of test statistics, which consequently 
causes invalid statistical inference.

It is ideal that the covariates used in randomization should be 
included in the subsequent analysis in the context of clinical 
trials according to regulatory guidelines 
\citep{ICH1998,EMA2015}.
However, unadjusted analyses are still commonly used in practice \citep{Kahan2014,Sverdlov2015}.
For example, to avoid the inclusion of an excessive number of parameters, investigation sites are usually 
omitted in the analysis model for a multicenter clinical trial.
Other practical reasons to avoid incorporating all covariates 
include the simplicity of the test procedure and
robustness to model misspecification \citep{Shao2010}.
Therefore, many working models may suffer from 
the issue of invalid statistical inference.
As covariates are commonly used in comparative studies such 
as biomarker analysis and
crowdsourced-internet experimentation 
\citep{Horton2011,Chandler2013}, understanding the 
impact of CAR on statistical inference is an 
increasingly pressing problem.

The issue over the validity of statistical inference after balancing covariates 
was mainly investigated on the basis of simulations in the early literature, such 
as \citet{Birkett1985,Forsythe1987}.  
More recently, theoretical progress has been made on the inference 
properties for some specific CAR procedures.
\citet{Shao2010} prove that the two-sample t-test is conservative under a 
special stratified randomization.
\citet{Ma2015} study hypotheses testing under a linear 
model for discrete covariate-adaptive randomization, which assumes that the 
overall and marginal imbalances across covariates are bounded in probability.
\cite{Bugni2018} also require stratification to study inference for CAR but allow that the fraction of units within each stratum tends to be normally distributed.
However, their results are limited because many CAR procedures deal 
with continuous covariates directly and do not necessarily satisfy the strong 
balancing assumptions.
In this article, we study inference properties under a general framework and 
demonstrate the impact of CAR on inference.

The main contributions of this article are as follows. 
First, we derive the properties of statistical inference following general 
CAR procedures under the linear model 
framework. Most importantly, we explicitly display the relationship between covariate balance and 
inference by deriving their asymptotic representations. 
This result explains why inference shows differences in behavior with various 
randomizations.
Second, we show that the results have broad applications, as illustrated 
by application in several randomization procedures, such as complete randomization (CR), RR, PSR, and 
$D_A$-BCD.
In addition, it provides a theoretical approach for formal evaluation of 
inference properties and comparison of the pros and cons of different CAR procedures.
Third, we propose a method to obtain valid and powerful tests based on our 
theoretical results. 
The study lays a foundation to understand the impact of covariate balance on statistical inference and illuminates future study in 
this area.
 
The general framework is introduced in Section \ref{sec:Framework}, and the theoretical results are presented in Section \ref{sec:HypoTestGeneral}.
We study four specific randomization procedures in 
terms of their conservativeness in hypothesis testing in 
Section \ref{sec:HypoTestSpecial} and propose a corrected test in Section \ref{sec:correction}.
In Section \ref{sec:Simulation} and \ref{sec:RealData}, numerical studies and a real data example are presented to illustrate the effectiveness of the proposed theory.
The extension to logistic regression models is discussed in Section \ref{sec:Logistic}.
Section \ref{sec:Conclusion} concludes with some remarks and suggested topics for future 
research. The technical proofs are given in the supplementary materials.

\section{General Framework}\label{sec:Framework}

Suppose that $n$ units are to be assigned to two treatment groups using a 
CAR procedure.
Let $T_i$ be the assignment of the $i$-th unit, i.e., $T_i=1$ for treatment 1 
and $T_i=0$ for treatment 2.
Let $\bm{x}_i=(x_{i,1},...,x_{i,p+q})^t$ represent $p+q$ covariates observed 
for the $i$-th unit,
where $x_{i,j}$ are independent and identically distributed as 
$X_j$ for each unit $i=1,...,n$.
A linear regression model is assumed for the outcome $Y_i$ of the 
$i$-th unit,
\begin{align}\label{eq:true_model}
Y_i &= \mu_1T_i + \mu_2(1-T_i) + \sum_{j=1}^{p+q}\beta_j x_{i,j} + \epsilon_i,
\end{align}
where $\mu_1$ and $\mu_2$ are the main effects of treatments 1 and 2, 
respectively, and $\mu_1-\mu_2$ is the treatment effect. 
Furthermore, $\bm{\beta}=(\beta_1,...,\beta_{p+q})^t$ represents the covariate 
effects, and  $\bm{\epsilon}=(\epsilon_1,...,\epsilon_{n})^t$ is independent and identically distributed 
random 
errors
with mean zero and variance $\sigma_\epsilon^2$, and is independent of 
covariates.
For simplicity, all covariates are assumed to be independent of each other and have 
expectations of zero, i.e., $\mathbb{E}X_j=0$ for $j=1,...,p+q$. 

After the units are allocated into treatment groups via CAR, a working model is used to estimate and test the treatment 
effect.
In such a working model, it is common in practice to include a subset of the
covariates used in randomization or sometimes even no covariates at all 
\citep{Shao2010, Ma2015, Sverdlov2015}.
Therefore, without loss of generality, suppose that the first $p$ covariates 
are included in the working model,
\begin{align}\label{eq:working_model}
\mathbb{E} [Y_i] = \mu_1T_i + \mu_2(1-T_i) + \sum_{j=1}^{p}\beta_j x_{i,j}.
\end{align}
Note that when $q=0$, all of the covariates are included in 
the working model, and when $p=0$, no covariates are included.

Let $\bm{Y}=(Y_1, ..., Y_n)^t$,  
$\bm{T}=(T_1, ..., T_n)^t$,
$\bf{X}=[\bf{X}_{\text{in}}; \bf{X}_{\text{ex}}]$, where
\begin{align*}
\bf{X}_{\text{in}}=\left[\begin{array}{cccccc} 
x_{1,1} & \cdots & x_{1,p}\\
\vdots&\ddots& \vdots\\
x_{n,1} & \cdots & x_{n,p} \\
\end{array}\right], \
\bf{X}_{\text{ex}}=\left[\begin{array}{cccccc} 
x_{1,p+1} & \cdots & x_{1,p+q}\\
\vdots&\ddots& \vdots\\
x_{n,p+1} & \cdots & x_{n,p+q} \\
\end{array}\right].
\end{align*}
Further let $\bm{\beta}_{\text{in}}=(\beta_1,...,\beta_p)^t$,
$\bm{\beta}_{\text{ex}}=(\beta_{p+1},...,\beta_{p+q})^t$, so that
$\bm{\beta}=(\bm{\beta}_{\text{in}}^t,\bm{\beta}_{\text{ex}}^t)^t$.
The working model can then also be written as
$
\mathbb{E} [\bm{Y}]={\bf G}\bm{\theta},
$
where ${\bf G}=[\bm{T}; \bm{1}_n-\bm{T}; \bf{X}_{\text{in}}]$ is the 
design matrix, 
$\bm{\theta}=(\mu_1,\mu_2,\bm{\beta}_{\text{in}}^t)^t$ is the vector of the
parameters of interest, and $\bm{1}_n$ is the $n$-dimensional vector of 
ones. 
Therefore, the ordinary least squares (OLS) estimate of $\bm{\theta}$, $\hat{\bm{\theta}}=(\hat{\mu}_1,\hat{\mu}_2,\hat{\bm{\beta}}_{\text{in}}^t)^t$,  
is
$
\hat{\bm{\theta}}
=({\bf G}^t{\bf G})^{-1}{\bf G}^t\bm{Y}.
$

Under CAR, the treatment assignments $\bm{T}$ depend on 
both $\bf{X}_{\text{in}}$ and  $\bf{X}_{\text{ex}}$.  The distribution of $\hat{\bm{\theta}}$ is often difficult to obtain.
However, testing the treatment effect is often the primary goal when performing a 
comparative study (e.g., a randomized clinical trial).
To detect whether a treatment effect exists, we have the following hypothesis 
testing problem,
\begin{align}\label{eq:test}
H_0:\mu_1-\mu_2=0 \text{ versus }H_1:\mu_1-\mu_2 \neq 0,
\end{align}
with the test statistic
\begin{align*}
S=\frac{\bm{L}^t\hat{\bm{\theta}}}{\sqrt{\hat{\sigma}_{w}^2 \bm{L}^t 
({\bf G}^t {\bf G})^{-1} \bm{L}}},
\end{align*}
where $\bm{L}=(1,-1,0,...,0)^t $ is a vector of length $p+2$,
and $\hat{\sigma}_{w}^2 = \Vert\bm{Y}-{\bf G}\hat{\bm{\theta}}\Vert^2/(n-p-2)$ 
is 
the model-based estimate of the error variance 
$\sigma_{w}^2=\sigma_{\epsilon}^2+\sum_{j=1}^{q}
\beta^2_{p+j} \textup{Var}(X_{p+j})$.
The traditional test rejects 
the null hypothesis at the significance level $\alpha$ if $|S|>z_{1-\alpha/2}$, 
where $z_{1-\alpha/2}$ is $({1-\alpha/2})$-th quantile of a 
standard normal distribution.

In addition to testing the treatment effect, it is often of interest to test 
whether there exist covariate effects. 
A general form of hypothesis testing can be used for any 
linear combinations of the covariate effects.
Let $\bf{C}$ be an 
$m\times(p+2)$ matrix of rank $m$ $(m \le p)$ with entries in the first two columns 
all equal to zero (no treatment effect to test).
Consider the following hypotheses
\begin{align}\label{eq:test_beta}
H_0: {\bf C}\bm{\theta}=\bm{c}_0  \mbox{ versus } H_1: 
{\bf C}\bm{\theta}=\bm{c}_1.
\end{align}
The test statistic is
\begin{align*}
S^* = 
\frac{({\bf C}\hat{\bm{\theta}}-\bm{c}_0)^t
	[{\bf C}({\bf G}^t{\bf G})^{-1}{\bf C}^t]^{-1}
	({\bf C}\hat{\bm{\theta}}-\bm{c}_0)}{m\hat\sigma^2_w}.
\end{align*}

The traditional test rejects the null hypothesis if 
$S^*>\chi^2_{m,(1-\alpha)}/m$, 
where $\chi^2_{m,(1-\alpha)}$ is 
$({1-\alpha})$-th percentile of a $\chi^2$ distribution with $m$ degrees of 
freedom.
We can let ${\bf C}=(0, 0, 1, 0,..., 0)$ to test a single covariate effect of $\beta_1$, and similarly for other covariate effects.

\section{General Properties}\label{sec:HypoTestGeneral}

Based on the framework above, we study the statistical properties of 
estimation and hypothesis testing, i.e., \eqref{eq:test} and 
\eqref{eq:test_beta}, under CAR.
We first introduce two widely satisfied assumptions.

\begin{assumption}\label{assum:prop} Global balance: 
	$n^{-1}\sum_{i=1}^n (2T_i-1)\overset{p}{\rightarrow} 0.$
\end{assumption}

\begin{assumption}\label{assum:diff_means} Covariate balance: 
$n^{-1/2}{\sum_{i=1}^n{\left(2T_i-1\right)\bm{x}_i}}
\overset{d}{\rightarrow}\bm{\xi}
$, where $\bm{\xi}$ is a (p+q)-dimensional random vector with 
$\mathbb{E}[\bm{\xi}]=\bm{0}$.
\end{assumption}

Assumption \ref{assum:prop} requires that the proportions of units in each 
treatment group converge to $1/2$,
which is usually the desired target proportion because balanced treatment 
assignments are more likely to provide efficient estimation and powerful tests.
In contrast, Assumption \ref{assum:diff_means} specifies the asymptotic 
properties of the imbalance vector of covariates, i.e., 
$\sum_{i=1}^{n}(2T_i-1)\bm{x}_i$. That is, the sums of the covariates in each treatment group tend to be 
equal as the sample size increases. Together with Assumption 
\ref{assum:prop}, this implies the similarity of the averages for each covariate 
between the two treatment groups.
These two assumptions ensure that a CAR procedure achieves good balancing properties, both globally and 
across covariates. 

We now present our main theoretical results.

\begin{theorem}
\label{thm:EstConsistent}
Under Assumptions \ref{assum:prop} and \ref{assum:diff_means}, the estimates based on the working 
model 
\eqref{eq:working_model}
are consistent. That is, 
$	\hat{\bm{\theta}} 
\overset{p}{\rightarrow} \bm{\theta}$.
Furthermore,
\begin{align*}
\sqrt{n}(\hat{\bm{\theta}}-\bm{\theta})
=\frac{1}{\sqrt{n}}{\bf V}^{-1}{\bf G}^t\left({\bf 
X}_{\textup{ex}}\bm{\beta}_{\textup{ex}}+\bm{\epsilon}\right)
+\bm{o}_P(1),
\end{align*}
where ${\bf V}=\textup{diag}\left(1/2,1/2,\textup{Var}(X_1),...,
\textup{Var}(X_p)\right)$.
\end{theorem}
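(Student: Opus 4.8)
The plan is to write the least squares estimator as an exact perturbation of the truth and then isolate the two sources of extra randomness, namely the omitted covariates ${\bf X}_{\text{ex}}\bm{\beta}_{\text{ex}}$ and the errors $\bm{\epsilon}$. Casting the true model \eqref{eq:true_model} in matrix form as $\bm{Y}={\bf G}\bm{\theta}+{\bf X}_{\text{ex}}\bm{\beta}_{\text{ex}}+\bm{\epsilon}$ and substituting into $\hat{\bm{\theta}}=({\bf G}^t{\bf G})^{-1}{\bf G}^t\bm{Y}$ yields the exact identity
\begin{align*}
\sqrt{n}(\hat{\bm{\theta}}-\bm{\theta})
=\left(\tfrac{1}{n}{\bf G}^t{\bf G}\right)^{-1}
  \tfrac{1}{\sqrt{n}}{\bf G}^t\left({\bf X}_{\text{ex}}\bm{\beta}_{\text{ex}}+\bm{\epsilon}\right).
\end{align*}
The result then reduces to two claims: that $\tfrac{1}{n}{\bf G}^t{\bf G}\overset{p}{\rightarrow}{\bf V}$, and that the score-like vector $\tfrac{1}{\sqrt{n}}{\bf G}^t({\bf X}_{\text{ex}}\bm{\beta}_{\text{ex}}+\bm{\epsilon})$ is $O_P(1)$.

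For the first claim I would compute $\tfrac{1}{n}{\bf G}^t{\bf G}$ block by block. The treatment block is essentially exact: $\bm{T}^t(\bm{1}_n-\bm{T})=0$, while $\tfrac{1}{n}\bm{T}^t\bm{T}=\tfrac{1}{n}\sum_i T_i\to 1/2$ and likewise for $\bm{1}_n-\bm{T}$ by Assumption \ref{assum:prop}. The covariate block $\tfrac{1}{n}{\bf X}_{\text{in}}^t{\bf X}_{\text{in}}=\tfrac{1}{n}\sum_i\bm{x}_{i,\text{in}}\bm{x}_{i,\text{in}}^t$ converges by the law of large numbers to $\textup{diag}(\textup{Var}(X_1),\dots,\textup{Var}(X_p))$, the off-diagonal entries vanishing because the covariates are independent with mean zero. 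The cross blocks are where Assumption \ref{assum:diff_means} enters: writing $T_i=\tfrac{1}{2}(2T_i-1)+\tfrac{1}{2}$ splits $\tfrac{1}{n}\bm{T}^t{\bf X}_{\text{in}}$ into the normalized imbalance $\tfrac{1}{2n}\sum_i(2T_i-1)\bm{x}_{i,\text{in}}$, which is $O_P(n^{-1/2})$ by Assumption \ref{assum:diff_means}, plus $\tfrac{1}{2n}\sum_i\bm{x}_{i,\text{in}}\overset{p}{\rightarrow}\bm{0}$. Hence the cross blocks vanish in the limit, giving the diagonal, invertible matrix ${\bf V}$.

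For the second claim I would show each component of $\tfrac{1}{\sqrt{n}}{\bf G}^t({\bf X}_{\text{ex}}\bm{\beta}_{\text{ex}}+\bm{\epsilon})$ is $O_P(1)$. The error contributions $\tfrac{1}{\sqrt{n}}{\bf G}^t\bm{\epsilon}$ are handled by conditioning on $({\bf X},\bm{T})$: since $\bm{\epsilon}$ is independent of the covariates and, through them, of $\bm{T}$, each component has conditional mean zero and conditional variance stabilizing at a finite constant, so it is $O_P(1)$. The omitted-covariate rows $\tfrac{1}{\sqrt{n}}{\bf X}_{\text{in}}^t{\bf X}_{\text{ex}}\bm{\beta}_{\text{ex}}$ are $O_P(1)$ by the central limit theorem because included and excluded covariates are independent with mean zero. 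The treatment rows such as $\tfrac{1}{\sqrt{n}}\bm{T}^t{\bf X}_{\text{ex}}\bm{\beta}_{\text{ex}}$ again split via $T_i=\tfrac{1}{2}(2T_i-1)+\tfrac{1}{2}$ into an imbalance piece controlled by Assumption \ref{assum:diff_means} (converging to $\tfrac{1}{2}\bm{\xi}_{\text{ex}}^t\bm{\beta}_{\text{ex}}$) and a centered average controlled by the central limit theorem, both $O_P(1)$.

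Combining the two claims finishes the proof. Consistency is immediate: the numerator being $O_P(1)$ and $(\tfrac{1}{n}{\bf G}^t{\bf G})^{-1}\to{\bf V}^{-1}$ give $\sqrt{n}(\hat{\bm{\theta}}-\bm{\theta})=O_P(1)$, so $\hat{\bm{\theta}}-\bm{\theta}=O_P(n^{-1/2})=o_P(1)$. For the asymptotic representation I would replace $(\tfrac{1}{n}{\bf G}^t{\bf G})^{-1}$ by ${\bf V}^{-1}$ at the cost of $[(\tfrac{1}{n}{\bf G}^t{\bf G})^{-1}-{\bf V}^{-1}]\,\tfrac{1}{\sqrt{n}}{\bf G}^t({\bf X}_{\text{ex}}\bm{\beta}_{\text{ex}}+\bm{\epsilon})$, which is $o_P(1)\cdot O_P(1)=\bm{o}_P(1)$ by Slutsky. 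The main obstacle is conceptual rather than computational: because $\bm{T}$ is a function of all the covariates, the cross terms cannot be treated as if $\bm{T}$ were independent of ${\bf X}$, and the entire argument hinges on the decomposition $T_i=\tfrac{1}{2}(2T_i-1)+\tfrac{1}{2}$, which quarantines the treatment-covariate dependence into the imbalance vector governed by Assumption \ref{assum:diff_means}.
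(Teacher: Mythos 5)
Your proposal is correct and follows essentially the same route as the paper: the same exact identity $\sqrt{n}(\hat{\bm{\theta}}-\bm{\theta})=(\tfrac{1}{n}{\bf G}^t{\bf G})^{-1}\tfrac{1}{\sqrt{n}}{\bf G}^t({\bf X}_{\text{ex}}\bm{\beta}_{\text{ex}}+\bm{\epsilon})$, the same decomposition $T_i=\tfrac{1}{2}(2T_i-1)+\tfrac{1}{2}$ (which is precisely the content of the paper's Lemma \ref{lem:lem1}) to control the treatment--covariate cross terms via Assumption \ref{assum:diff_means}, and the same Slutsky step to replace $(\tfrac{1}{n}{\bf G}^t{\bf G})^{-1}$ by ${\bf V}^{-1}$. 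The only cosmetic difference is that you obtain consistency as a corollary of the $\sqrt{n}$-rate representation, whereas the paper proves it first with a separate $1/n$-normalized argument.
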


The representation provides a convenient means to derive the asymptotic 
distribution of $\hat{\bm{\theta}}$ and its linear combinations.
In particular, for the estimated treatment effect $\hat\mu_1-\hat\mu_2$, 
\begin{align*}
\sqrt{n}[(\hat{\mu}_1-\hat{\mu}_2)-(\mu_1-\mu_2)]=
\frac{2}{\sqrt{n}}\sum_{i=1}^n\left(2T_i-1\right)
\left(\sum_{j=1}^q\beta_{p+j}x_{i,p+j}+\epsilon_i\right)
+o_P(1),
\end{align*}
based on which the asymptotic distribution can be obtained.
We partition $\bm{\xi}=(\bm{\xi} _{\text{in}}^t, \bm{\xi} _{\text{ex}}^t)^t$ 
so that $\bm{\xi} _{\text{in}}$ represents the first $p$ dimensions of 
$\bm{\xi}$
and $\bm{\xi}_{\text{ex}}$ the last $q$ dimensions.
Let $Z$ be a standard normal random variable that is independent of $\bm{\xi} 
_{\text{ex}}$. We have the following corollary.

\begin{corollary}
\label{thm:cor:muDist}
Under the working model \eqref{eq:working_model}, assume that Assumptions 
\ref{assum:prop} and \ref{assum:diff_means} are satisfied. Then
$
\sqrt{n}[(\hat{\mu}_1-\hat{\mu}_2)-(\mu_1-\mu_2)]\overset{d}{\rightarrow} 
2\sigma_{\epsilon} Z + 2\bm{\beta}_{\textup{ex}}^t\bm{\xi} _{\textup{ex}}.
$
\end{corollary}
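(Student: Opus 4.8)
The plan is to take as given the asymptotic representation for the estimated treatment effect displayed immediately after Theorem \ref{thm:EstConsistent},
\begin{align*}
\sqrt{n}[(\hat{\mu}_1-\hat{\mu}_2)-(\mu_1-\mu_2)]=\frac{2}{\sqrt{n}}\sum_{i=1}^n(2T_i-1)\Big(\sum_{j=1}^q\beta_{p+j}x_{i,p+j}+\epsilon_i\Big)+o_P(1),
\end{align*}
and to identify the limiting law of the leading term, after which Slutsky's theorem disposes of the $o_P(1)$ remainder. I would split the leading term as $A_n+B_n$, where $A_n=2\bm{\beta}_{\text{ex}}^t\,n^{-1/2}\sum_{i=1}^n(2T_i-1)\bm{x}_{i,\text{ex}}$ collects the contribution of the excluded covariates and $B_n=2n^{-1/2}\sum_{i=1}^n(2T_i-1)\epsilon_i$ collects the contribution of the model errors.

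For $A_n$, Assumption \ref{assum:diff_means} gives $n^{-1/2}\sum_{i=1}^n(2T_i-1)\bm{x}_i\overset{d}{\rightarrow}\bm{\xi}$, so the excluded block converges to $\bm{\xi}_{\text{ex}}$ and, since $\bm{v}\mapsto 2\bm{\beta}_{\text{ex}}^t\bm{v}$ is continuous, the continuous mapping theorem yields $A_n\overset{d}{\rightarrow}2\bm{\beta}_{\text{ex}}^t\bm{\xi}_{\text{ex}}$. The term $B_n$ is handled by a conditional central limit theorem. The decisive structural fact is that the randomization uses only the covariates (and possibly an external randomization source), never the outcomes, so the error vector $\bm{\epsilon}$ is independent of $\mathcal{F}_n:=\sigma(\bm{T},\bm{X})$. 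Conditioning on $\mathcal{F}_n$, the summands of $B_n$ are independent with mean zero, and their exact conditional variance is $4\sigma_\epsilon^2 n^{-1}\sum_{i=1}^n(2T_i-1)^2=4\sigma_\epsilon^2$ because $(2T_i-1)^2\equiv 1$. Since the conditional law of each $\epsilon_i$ equals its unconditional law, the conditional Lindeberg condition reduces to the ordinary one for i.i.d.\ errors with finite variance and hence holds; the Lindeberg--Feller theorem then gives $B_n\mid\mathcal{F}_n\overset{d}{\rightarrow}N(0,4\sigma_\epsilon^2)$, equivalently $\mathbb{E}[e^{\mathrm{i}tB_n}\mid\mathcal{F}_n]\to e^{-2\sigma_\epsilon^2 t^2}$ in probability.

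To assemble the two pieces with the correct independence I would pass to characteristic functions. Writing $\mathbb{E}[e^{\mathrm{i}t(A_n+B_n)}]=\mathbb{E}\big[e^{\mathrm{i}tA_n}\,\mathbb{E}[e^{\mathrm{i}tB_n}\mid\mathcal{F}_n]\big]$, which is valid because $A_n$ is $\mathcal{F}_n$-measurable, and replacing the inner factor by its limit, bounded convergence gives
\begin{align*}
\mathbb{E}[e^{\mathrm{i}t(A_n+B_n)}]\;\longrightarrow\;e^{-2\sigma_\epsilon^2 t^2}\,\lim_n\mathbb{E}[e^{\mathrm{i}tA_n}]=e^{-2\sigma_\epsilon^2 t^2}\,\mathbb{E}\big[e^{\mathrm{i}t\,2\bm{\beta}_{\text{ex}}^t\bm{\xi}_{\text{ex}}}\big],
\end{align*}
which is exactly the characteristic function of $2\sigma_\epsilon Z+2\bm{\beta}_{\text{ex}}^t\bm{\xi}_{\text{ex}}$ with $Z$ standard normal and independent of $\bm{\xi}_{\text{ex}}$. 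The hard part is precisely this independence: $\bm{\xi}_{\text{ex}}$ is a limit functional of $(\bm{T},\bm{X})$ whereas $Z$ arises from the errors, so marginal convergence of $A_n$ and $B_n$ separately is not enough to conclude. The conditioning argument resolves it because the conditional limit of $B_n$ does not depend on $\mathcal{F}_n$; this forces the Gaussian limit to be independent of every limit that is measurable with respect to $(\bm{T},\bm{X})$, and in particular of $\bm{\xi}_{\text{ex}}$, yielding the stated joint form.
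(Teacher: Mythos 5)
Your proposal is correct and follows the same overall route as the paper: both start from the representation after Theorem \ref{thm:EstConsistent}, split the leading term into the excluded-covariate part (handled by Assumption \ref{assum:diff_means} and continuous mapping) and the error part (handled by a CLT with variance $4\sigma_\epsilon^2$ computed from $(2T_i-1)^2=1$), and then argue that the two limits combine independently. The one place you genuinely diverge is the independence step, and your version is the tighter one. The paper asserts that, for each finite $n$, $\mathbb{P}(V_n<v\mid U_n<u)$ equals the unconditional law of $2n^{-1/2}\sum_i\tilde\epsilon_i$ with $\tilde\epsilon_i$ fresh i.i.d.\ copies; read literally, that exact finite-sample equality requires the conditional law of $\sum_i(2T_i-1)\epsilon_i$ given the sign pattern not to depend on the pattern, which holds only for symmetric error distributions. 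Your argument sidesteps this: you only need the \emph{conditional limit} of $B_n$ given $\mathcal{F}_n=\sigma(\bm{T},{\bf X})$ to be the deterministic law $N(0,4\sigma_\epsilon^2)$, which the conditional Lindeberg--Feller theorem delivers for any mean-zero, finite-variance errors, and the characteristic-function assembly $\mathbb{E}[e^{\mathrm{i}tA_n}\,\mathbb{E}(e^{\mathrm{i}tB_n}\mid\mathcal{F}_n)]$ then forces the Gaussian component to be independent of $\bm{\xi}_{\textup{ex}}$. So your route buys rigor at the joint-convergence step without changing the substance; the paper's route is shorter but, as written, implicitly leans on a symmetry (or an unstated asymptotic approximation) that your conditioning argument makes unnecessary.
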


The corollary describes the asymptotic behavior of $\hat\mu_1-\hat\mu_2$ 
under the working model \eqref{eq:working_model}.
If the model parameters $\bm{\beta}$ and $\sigma_\epsilon^2$ are known, 
statistical inference, such as a Wald-type hypothesis test, can be 
constructed based on the asymptotic distribution.
In practice, these parameters are unknown, and the model-based test procedure 
defined in \eqref{eq:test} is used instead.
It assumes the normal approximation for the asymptotic distribution, and the 
asymptotic variance is estimated by $\hat{\sigma}_{w}^2 \bm{L}^t 
({\bf G}^t {\bf G})^{-1} \bm{L}$, which is shown in the supplementary materials to equal 
$4\sigma_{w}^2/n+o_P(1/n)$.
Furthermore, let $\lambda_1=\sigma_{\epsilon}/\sigma_{w}$ and $\lambda_2=1/\sigma_{w}$.
The asymptotic properties of the test \eqref{eq:test} are presented in the following theorem. 

\begin{theorem}
\label{thm:TestStatDistGeneral}
Under the working model \eqref{eq:working_model},  assume that Assumptions \ref{assum:prop} and 
\ref{assum:diff_means} are satisfied.
\begin{enumerate}
	\item
	When $H_0: \mu_1-\mu_2=0$, then
	$
	S \overset{d}{\rightarrow} \lambda_1 Z + \lambda_2 
	\bm{\beta}_{\textup{ex}}^t\bm{\xi} _{\textup{ex}}.
	$
	
	\item
	When $H_1: \mu_1-\mu_2 \neq 0$, we consider a sequence of local alternatives 
	with $\mu_1-\mu_2=\delta/\sqrt{n}$ for a fixed $\delta \neq 0$, then
	$
	S \overset{d}{\rightarrow} \lambda_1 Z + \lambda_2 
	\bm{\beta}_{\textup{ex}}^t\bm{\xi} 
	_{\textup{ex}}+\lambda_2\delta/2.
	$
\end{enumerate}
\end{theorem}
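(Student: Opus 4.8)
The plan is to express $S$ as the ratio of a numerator whose limit is supplied by Corollary \ref{thm:cor:muDist} and a denominator controlled by the variance expansion quoted immediately above the theorem, and then to finish by Slutsky's theorem, treating the local-alternative drift as a deterministic shift. Since $\bm{L}=(1,-1,0,\dots,0)^t$, the numerator is $\bm{L}^t\hat{\bm{\theta}}=\hat{\mu}_1-\hat{\mu}_2$, so
\begin{align*}
S=\frac{\hat{\mu}_1-\hat{\mu}_2}{\sqrt{\hat{\sigma}_{w}^2\,\bm{L}^t({\bf G}^t{\bf G})^{-1}\bm{L}}}.
\end{align*}
Using $\hat{\sigma}_{w}^2\,\bm{L}^t({\bf G}^t{\bf G})^{-1}\bm{L}=4\sigma_{w}^2/n+o_P(1/n)$, the denominator equals $(2\sigma_{w}/\sqrt{n})\,(1+o_P(1))$, so that $S=\big(\sqrt{n}(\hat{\mu}_1-\hat{\mu}_2)/(2\sigma_{w})\big)(1+o_P(1))$.

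First I would treat Part 1. Under $H_0$ we have $\mu_1-\mu_2=0$, hence $\sqrt{n}(\hat{\mu}_1-\hat{\mu}_2)=\sqrt{n}[(\hat{\mu}_1-\hat{\mu}_2)-(\mu_1-\mu_2)]\overset{d}{\rightarrow}2\sigma_{\epsilon}Z+2\bm{\beta}_{\text{ex}}^t\bm{\xi}_{\text{ex}}$ by Corollary \ref{thm:cor:muDist}. Dividing by the constant $2\sigma_{w}$ and absorbing the $(1+o_P(1))$ factor by Slutsky's theorem gives $S\overset{d}{\rightarrow}\lambda_1 Z+\lambda_2\bm{\beta}_{\text{ex}}^t\bm{\xi}_{\text{ex}}$, using $\lambda_1=\sigma_{\epsilon}/\sigma_{w}$ and $\lambda_2=1/\sigma_{w}$. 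For Part 2, under the local alternative $\mu_1-\mu_2=\delta/\sqrt{n}$ I would decompose $\sqrt{n}(\hat{\mu}_1-\hat{\mu}_2)=\sqrt{n}[(\hat{\mu}_1-\hat{\mu}_2)-(\mu_1-\mu_2)]+\delta$; the first term again converges to $2\sigma_{\epsilon}Z+2\bm{\beta}_{\text{ex}}^t\bm{\xi}_{\text{ex}}$ by Corollary \ref{thm:cor:muDist} while the second is the fixed constant $\delta$, so the numerator converges to $2\sigma_{\epsilon}Z+2\bm{\beta}_{\text{ex}}^t\bm{\xi}_{\text{ex}}+\delta$. Dividing by $2\sigma_{w}$ yields $\lambda_1 Z+\lambda_2\bm{\beta}_{\text{ex}}^t\bm{\xi}_{\text{ex}}+\tfrac{1}{2}\lambda_2\delta$, since $\delta/(2\sigma_{w})=\tfrac12\lambda_2\delta$.

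Given the corollary and the denominator expansion, the two parts are thus routine Slutsky arguments, so the real work lies in justifying the inputs, and I expect the main obstacle to be verifying the denominator expansion, particularly that it persists under the local alternative rather than only under $H_0$. I would split it into two claims. The first, $n\,\bm{L}^t({\bf G}^t{\bf G})^{-1}\bm{L}\to\bm{L}^t{\bf V}^{-1}\bm{L}=4$, is a statement about the design matrix alone: it follows from $({\bf G}^t{\bf G})/n\overset{p}{\rightarrow}{\bf V}$, where Assumption \ref{assum:prop} supplies the $1/2$ diagonal entries, Assumption \ref{assum:diff_means} forces the treatment-covariate cross blocks to vanish, and the law of large numbers handles ${\bf X}_{\text{in}}^t{\bf X}_{\text{in}}/n$; this is unaffected by the value of the treatment effect. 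The second, $\hat{\sigma}_{w}^2\overset{p}{\rightarrow}\sigma_{w}^2$, is where the local alternative must be checked: one shows that replacing $\mu_1-\mu_2=0$ by a drift of order $n^{-1/2}$ perturbs the residual sum of squares only by $o_P(1)$ after dividing by $n-p-2$, so the fitted residuals still behave like ${\bf X}_{\text{ex}}\bm{\beta}_{\text{ex}}+\bm{\epsilon}$ and have sample variance converging to $\sigma_{\epsilon}^2+\sum_{j=1}^q\beta_{p+j}^2\textup{Var}(X_{p+j})=\sigma_{w}^2$. This last step leans on the consistency of $\hat{\bm{\theta}}$ from Theorem \ref{thm:EstConsistent} together with the negligibility of the $n^{-1/2}$ shift; once it is in place, Slutsky's theorem closes both cases.
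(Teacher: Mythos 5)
Your proposal is correct and follows essentially the same route as the paper: the numerator is handled by Corollary \ref{thm:cor:muDist} (with the deterministic shift $\delta$ under the local alternative), the denominator by the expansion $\hat{\sigma}_{w}^2\,\bm{L}^t({\bf G}^t{\bf G})^{-1}\bm{L}=4\sigma_{w}^2/n+o_P(1/n)$ built from ${\bf G}^t{\bf G}/n\overset{p}{\rightarrow}{\bf V}$ and $\hat{\sigma}_{w}^2\overset{p}{\rightarrow}\sigma_{w}^2$, and Slutsky's theorem closes both cases. If anything, you supply slightly more detail than the paper on why the residual-variance consistency survives the $n^{-1/2}$ drift, a point the paper's proof passes over silently.
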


The asymptotic distribution of test statistic $S$ under $H_0$ consists of two 
independent components, $\lambda_1 Z$ and $\lambda_2 
\bm{\beta}_{\text{ex}}^t\bm{\xi} _{\text{ex}}$.
The first component is due to the random error 
$\epsilon_i$ in the underlying model \eqref{eq:true_model}, and remains 
invariant under various CAR because the 
randomization procedure uses only covariate information and does 
not depend on the observed responses.
In addition, note that $\bm{\xi}_{\text{ex}}$ in the second 
component is the 
last $q$ dimensions of $\bm{\xi}$. 
By Assumption \ref{assum:diff_means}, $\bm{\xi}$ is the asymptotic 
distribution of the imbalance vector of covariates $\sum_{i=1}^{n}(2T_i-1)\bm{x}_i$ and 
illustrates how well the covariates are balanced under a specific 
CAR.
The better it performs in terms of covariate balance, the more concentrated 
$\bm{\xi}$ is distributed around $0$.
Therefore, the second component of $S$ represents the effects of 
CAR on the test statistic through the level of 
covariate balance. 
Depending on the extent to which the covariates are balanced, the test may behave 
differently in terms of size and power.

When the asymptotic distribution of $S$ is no longer a standard normal 
distribution, the traditional test may fail to maintain the pre-specified type 
I error.
Let $s_{1-\alpha/2}$ be $({1-\alpha/2})$-th quantile of the asymptotic 
distribution of $S$ under the null hypothesis.
If $s_{1-\alpha/2}<z_{1-\alpha/2}$, the test is conservative in 
the sense that the actual type I error is smaller than the prespecified level 
$\alpha$.
In fact, such conservativeness is often the case for CAR and can be demonstrated by a comparison of $\bm{\xi}$ between CR and CAR.
Under CR, $\bm{\xi}$ follows a normal distribution that 
makes 
$S$ follow a standard normal distribution asymptotically (Section 
\ref{sec:HypoTestSpecialCR}), in which case the test has valid type 
I error.
However, CAR 
is used to reduce the imbalance 
of covariates between treatment groups, and hence $\bm{\xi}$ is more 
concentrated around $0$ than with CR, leading to 
conservative tests.
Three special cases of CAR (RR, PSR, 
and $D_A$-BCD) are discussed in detail in Sections 
\ref{sec:HypoTestSpecialRR} to
\ref{sec:HypoTestSpecialDABCD}.
The correction of conservative tests is discussed in Section 
\ref{sec:correction}.

In addition to type I error, the explicit form of power can also be derived based 
on Theorem \ref{thm:TestStatDistGeneral}.
Under the local alternatives, $\mu_1-\mu_2=\delta/\sqrt{n}$ for a fixed 
$\delta \neq 0$, the power is
\begin{align*}
\mathbb{P}
(|S|>z_{1-\alpha/2})=F_S(-z_{1-\alpha/2}+\frac{1}{2}\lambda_2\delta)
							+F_S(-z_{1-\alpha/2}-\frac{1}{2}\lambda_2\delta)+o(1),
\end{align*}
where $F_S$ is the cumulative distribution function of the asymptotic 
distribution of $S$.
In Section \ref{sec:Simulation} power is evaluated numerically for several CAR procedures.

In a manner similar to that of the treatment effect, the inference for the 
covariates can also be studied following the representation given in Theorem \ref{thm:EstConsistent}.
The next theorem illustrates the asymptotic normality of 
$\bm{\beta}_{\text{in}}$ under CAR.

\begin{corollary}
	\label{thm:cor:betaDist}
	Under the working model \eqref{eq:working_model},  assume that Assumptions 
	\ref{assum:prop} and \ref{assum:diff_means} are satisfied. Then
	$
	\sqrt{n}(\hat{\bm{\beta}}_{\textup{in}}-\bm{\beta}_{\textup{in}})
	\overset{d}{\rightarrow} 
	N(\bm{0}, \sigma_{w}^2\tilde{\bf V}^{-1}),
	$
	where $\tilde{\bf 
	V}=\textup{diag}(\textup{Var}(X_1),...,\textup{Var}(X_{p}))$.
\end{corollary}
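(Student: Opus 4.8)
The plan is to read off the subvector of $\sqrt{n}(\hat{\bm{\theta}}-\bm{\theta})$ corresponding to $\hat{\bm{\beta}}_{\text{in}}$ from the asymptotic representation in Theorem \ref{thm:EstConsistent}, and then apply the classical multivariate central limit theorem. First I would exploit the block structure of ${\bf V}^{-1}{\bf G}^t$. Since ${\bf V}=\textup{diag}(1/2,1/2,\textup{Var}(X_1),\dots,\textup{Var}(X_p))$ is block diagonal and the last $p$ rows of ${\bf G}^t$ are exactly the rows of ${\bf X}_{\text{in}}^t$, the last $p$ coordinates of the representation isolate to
\begin{align*}
\sqrt{n}(\hat{\bm{\beta}}_{\text{in}}-\bm{\beta}_{\text{in}})
=\frac{1}{\sqrt{n}}\tilde{\bf V}^{-1}{\bf X}_{\text{in}}^t
\left({\bf X}_{\text{ex}}\bm{\beta}_{\text{ex}}+\bm{\epsilon}\right)+\bm{o}_P(1),
\end{align*}
with $\tilde{\bf V}=\textup{diag}(\textup{Var}(X_1),\dots,\textup{Var}(X_p))$.

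The key observation I would emphasize is that this leading term depends only on the covariates and the errors and is \emph{free of the treatment assignments} $\bm{T}$. Consequently, unlike the treatment-effect estimator of Corollary \ref{thm:cor:muDist}, the covariate-adjusted randomization has no effect on its limiting law, and no appeal to the balance vector $\bm{\xi}$ of Assumption \ref{assum:diff_means} is needed beyond what Theorem \ref{thm:EstConsistent} already supplies. This is precisely why the limit here is an ordinary Gaussian rather than the mixture seen for $\hat\mu_1-\hat\mu_2$.

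Next I would write $\bm{w}_i=\bm{x}_{i,\text{in}}\big(\sum_{j=1}^q\beta_{p+j}x_{i,p+j}+\epsilon_i\big)$, where $\bm{x}_{i,\text{in}}=(x_{i,1},\dots,x_{i,p})^t$, so that the relevant sum is $n^{-1/2}\sum_{i=1}^n\bm{w}_i$ of i.i.d.\ vectors. Using that all covariates are mutually independent with mean zero and that $\epsilon_i$ is independent of the covariates, a short moment computation gives $\mathbb{E}[\bm{w}_i]=\bm{0}$ and $\textup{Cov}(\bm{w}_i)=\sigma_w^2\tilde{\bf V}$: the scalar factor $\sum_j\beta_{p+j}x_{i,p+j}+\epsilon_i$ is independent of $\bm{x}_{i,\text{in}}$ and has variance $\sigma_w^2=\sigma_\epsilon^2+\sum_{j=1}^q\beta_{p+j}^2\textup{Var}(X_{p+j})$, while $\mathbb{E}[x_{i,k}x_{i,l}]=\textup{Var}(X_k)\mathbbm{1}\{k=l\}$ produces the diagonal $\tilde{\bf V}$. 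The multivariate CLT then yields $n^{-1/2}\sum_i\bm{w}_i\overset{d}{\rightarrow}N(\bm{0},\sigma_w^2\tilde{\bf V})$.

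Finally, applying the constant linear map $\tilde{\bf V}^{-1}$ and using Slutsky's theorem to absorb the $\bm{o}_P(1)$ remainder gives the limit $N\big(\bm{0},\,\tilde{\bf V}^{-1}(\sigma_w^2\tilde{\bf V})\tilde{\bf V}^{-1}\big)=N(\bm{0},\sigma_w^2\tilde{\bf V}^{-1})$, as claimed. Once Theorem \ref{thm:EstConsistent} is in hand there is no substantial obstacle; the only place requiring genuine care is the covariance computation, where the mutual independence and mean-zero assumptions on the covariates must be used to annihilate every cross term between included and excluded covariates and to factor the scalar variance out cleanly as $\sigma_w^2$.
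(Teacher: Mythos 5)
Your proposal is correct and follows exactly the paper's route: extract the last $p$ coordinates of the representation in Theorem \ref{thm:EstConsistent}, note the leading term is $\tfrac{1}{\sqrt{n}}\tilde{\bf V}^{-1}{\bf X}_{\text{in}}^t({\bf X}_{\text{ex}}\bm{\beta}_{\text{ex}}+\bm{\epsilon})$, and apply the multivariate CLT plus Slutsky. The only difference is that you spell out the covariance computation that the paper leaves as ``an immediate consequence of the central limit theorem,'' which is a welcome bit of extra rigor but not a different argument.
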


Based on the asymptotic normality of $\hat{\bm{\beta}}_{\text{in}}$, tests for 
these parameters can be constructed with the asymptotic variances 
replaced by their consistent estimates.
The next theorem shows that the traditional test is valid for linear combinations of 
$\bm{\beta}_{\text{in}}$.

\begin{theorem}
	\label{thm:TestStatDistBeta}
Under the working model \eqref{eq:working_model},  assume that Assumptions \ref{assum:prop} and 
\ref{assum:diff_means} are satisfied.
	\begin{enumerate}
		\item
		When $H_0: {\bf C}\bm{\theta}=\bm{c}_0$, then
		$
		S^* \overset{d}{\rightarrow} \chi^2_{m}/m.
		$
		
		\item
		When $H_1: {\bf C}\bm{\theta}=\bm{c}_1$,  we consider a sequence of local 
		alternatives 
		with $\bm{c}_1-\bm{c}_0=\bm{\Delta}/\sqrt{n}$ for a fixed $\bm{\Delta} 
		\neq 
		\bm{0}$, then
		$
		S^* \overset{d}{\rightarrow} \chi^2_{m}(\phi)/m
		$, where $\phi=\bm{\Delta}^t[{\bf C} 
		{\bf V}^{-1}{\bf C}^t]^{-1}\bm{\Delta}/
		\sigma_{w}^2$ is the non-central parameter.
	\end{enumerate}
\end{theorem}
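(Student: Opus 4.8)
The plan is to rewrite $S^*$ as a quadratic form in an asymptotically normal vector whose weighting matrix and scale both converge in probability to constants, and then to invoke the distribution theory for Gaussian quadratic forms. First I would exploit the structure of ${\bf C}$: since ${\bf C}$ has rank $m$ and its first two columns vanish, writing ${\bf C}_{\beta}$ for the $m\times p$ block formed by its last $p$ columns gives ${\bf C}\hat{\bm{\theta}}={\bf C}_{\beta}\hat{\bm{\beta}}_{\text{in}}$, and under $H_0$, $\bm{c}_0={\bf C}\bm{\theta}={\bf C}_{\beta}\bm{\beta}_{\text{in}}$, so ${\bf C}\hat{\bm{\theta}}-\bm{c}_0={\bf C}_{\beta}(\hat{\bm{\beta}}_{\text{in}}-\bm{\beta}_{\text{in}})$. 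Corollary \ref{thm:cor:betaDist} and linearity of Gaussian limits then yield
\begin{align*}
\bm{W}_n:=\sqrt{n}\,({\bf C}\hat{\bm{\theta}}-\bm{c}_0)\overset{d}{\rightarrow}\bm{U}\sim N(\bm{0},\bm{\Sigma}),\qquad \bm{\Sigma}=\sigma_{w}^2\,{\bf C}_{\beta}\tilde{\bf V}^{-1}{\bf C}_{\beta}^t.
\end{align*}
Because ${\bf V}^{-1}=\textup{diag}(2,2,\tilde{\bf V}^{-1})$ and ${\bf C}$ annihilates the first two coordinates, ${\bf C}_{\beta}\tilde{\bf V}^{-1}{\bf C}_{\beta}^t={\bf C}{\bf V}^{-1}{\bf C}^t$, so $\bm{\Sigma}=\sigma_w^2\,{\bf C}{\bf V}^{-1}{\bf C}^t$, which is invertible since ${\bf C}$ has full row rank and ${\bf V}\succ 0$.

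Next I would identify the weighting matrix and scale. A block computation gives ${\bf G}^t{\bf G}/n\overset{p}{\rightarrow}{\bf V}$: the treatment blocks converge by Assumption \ref{assum:prop}, the block ${\bf X}_{\text{in}}^t{\bf X}_{\text{in}}/n\to\tilde{\bf V}$ by the law of large numbers (covariates mean-zero and independent), and the treatment--covariate cross blocks $n^{-1}\sum T_i\bm{x}_{i,\text{in}}$ vanish, their sample-mean part by the law of large numbers and their imbalance part $\tfrac{1}{2n}\sum(2T_i-1)\bm{x}_{i,\text{in}}=O_P(n^{-1/2})$ by Assumption \ref{assum:diff_means}. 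Hence $n({\bf G}^t{\bf G})^{-1}\overset{p}{\rightarrow}{\bf V}^{-1}$. Using $({\bf G}^t{\bf G})^{-1}=n^{-1}\cdot n({\bf G}^t{\bf G})^{-1}$, the $n$-factors cancel exactly and
\begin{align*}
S^*=\frac{\bm{W}_n^t\,B_n\,\bm{W}_n}{m\,\hat\sigma_{w}^2},\qquad B_n:=\bigl[{\bf C}\bigl(n({\bf G}^t{\bf G})^{-1}\bigr){\bf C}^t\bigr]^{-1}\overset{p}{\rightarrow}\bigl[{\bf C}{\bf V}^{-1}{\bf C}^t\bigr]^{-1}.
\end{align*}
Combined with the consistency $\hat\sigma_w^2\overset{p}{\rightarrow}\sigma_w^2$ (the fact underlying the Appendix identity $\hat\sigma_{w}^2\bm{L}^t({\bf G}^t{\bf G})^{-1}\bm{L}=4\sigma_w^2/n+o_P(1/n)$, the omitted covariates ${\bf X}_{\text{ex}}\bm{\beta}_{\text{ex}}$ inflating the residual variance from $\sigma_\epsilon^2$ to $\sigma_w^2$), Slutsky's theorem gives $S^*\overset{d}{\rightarrow}\bm{U}^t[{\bf C}{\bf V}^{-1}{\bf C}^t]^{-1}\bm{U}/(m\sigma_w^2)=\bm{U}^t\bm{\Sigma}^{-1}\bm{U}/m$.

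For part 1, $\bm{U}\sim N(\bm{0},\bm{\Sigma})$ makes $\bm{\Sigma}^{-1/2}\bm{U}\sim N(\bm{0},\boldI_m)$, so $\bm{U}^t\bm{\Sigma}^{-1}\bm{U}\sim\chi^2_m$ and $S^*\overset{d}{\rightarrow}\chi^2_m/m$. For part 2, under $\bm{c}_1-\bm{c}_0=\bm{\Delta}/\sqrt{n}$ I would split ${\bf C}\hat{\bm{\theta}}-\bm{c}_0=({\bf C}\hat{\bm{\theta}}-{\bf C}\bm{\theta})+\bm{\Delta}/\sqrt{n}$; since the representation in Theorem \ref{thm:EstConsistent} is free of the true value of $\bm{\theta}$, the centered part still satisfies $\sqrt{n}({\bf C}\hat{\bm{\theta}}-{\bf C}\bm{\theta})\overset{d}{\rightarrow}N(\bm{0},\bm{\Sigma})$, whence $\bm{W}_n\overset{d}{\rightarrow}\bm{U}\sim N(\bm{\Delta},\bm{\Sigma})$. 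The identical computation then gives $\bm{U}^t\bm{\Sigma}^{-1}\bm{U}/m$, a scaled noncentral $\chi^2_m$ with noncentrality $\phi=\bm{\Delta}^t\bm{\Sigma}^{-1}\bm{\Delta}=\bm{\Delta}^t[{\bf C}{\bf V}^{-1}{\bf C}^t]^{-1}\bm{\Delta}/\sigma_w^2$, as claimed.

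The step I expect to be most delicate is not the Gaussian algebra but the guarantee that the covariate-imbalance randomness $\bm{\xi}$ does not contaminate the limit of $\hat{\bm{\beta}}_{\text{in}}$: unlike the treatment contrast, whose limit in Corollary \ref{thm:cor:muDist} carries the non-Gaussian term $2\bm{\beta}_{\textup{ex}}^t\bm{\xi}_{\textup{ex}}$, the coefficient estimates must possess a clean normal limit for $S^*$ to be (central) $\chi^2$ under $H_0$. This decoupling is exactly Corollary \ref{thm:cor:betaDist}, and it traces back to the off-diagonal vanishing of ${\bf G}^t{\bf G}/n$ verified above, which separates the $\bm{\epsilon}$- and ${\bf X}_{\text{ex}}$-driven (CLT-governed) fluctuations of $\hat{\bm{\beta}}_{\text{in}}$ from the imbalance direction that enters only the treatment-effect coordinates.
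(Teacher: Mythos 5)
Your proposal is correct and follows essentially the same route as the paper: reduce ${\bf C}\hat{\bm{\theta}}-\bm{c}_0$ to a linear map of $\hat{\bm{\beta}}_{\text{in}}-\bm{\beta}_{\text{in}}$ via the vanishing first two columns of ${\bf C}$, invoke Corollary \ref{thm:cor:betaDist} for the Gaussian limit with covariance $\sigma_w^2\,{\bf C}{\bf V}^{-1}{\bf C}^t$, use ${\bf G}^t{\bf G}/n\overset{p}{\rightarrow}{\bf V}$ and $\hat\sigma_w^2\overset{p}{\rightarrow}\sigma_w^2$ with Slutsky, and conclude by the (non)central $\chi^2$ theory for Gaussian quadratic forms. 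Your write-up is somewhat more explicit than the paper's (the block convergence of ${\bf G}^t{\bf G}/n$ and the remark on why $\bm{\xi}$ does not contaminate $\hat{\bm{\beta}}_{\text{in}}$), but the underlying argument is the same.
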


Theorem \ref{thm:TestStatDistBeta} states that the type I error is maintained 
when testing the covariate effects under CAR.
The power, however, is reduced if not all covariate information is 
incorporated in the working model.
Because the inference for covariate effects is valid under 
CAR, the next section focuses mainly on  
testing treatment effect.

\section{Properties of Several CAR procedures}\label{sec:HypoTestSpecial}

\subsection{Complete Randomization}\label{sec:HypoTestSpecialCR}

CR assigns units to each treatment group with the 
equal probability $1/2$.
Because the treatment assignment is independent and does not depend on covariates, it 
follows from the central limit theorem that
$
n^{-1/2}{\sum_{i=1}^{n}{(2T_i-1)\bm{x}_i}}\overset{d}{\rightarrow}
N(\bm{0},\bm{\Sigma})
$
with $\bm{\Sigma}=\textup{diag}(\textup{Var}(X_1),...,\textup{Var}(X_{p+q}))$. 

Therefore, under CR, $\bm{\xi}$ defined in Assumption \ref{assum:diff_means}
is a normal distribution, and furthermore, $\bm{\beta}_{\textup{ex}}^t\bm{\xi} 
_{\textup{ex}} \sim N(0,\sum_{j=1}^{q}\beta^2_{p+j} \textup{Var}(X_{p+j}) )$.
By Theorem \ref{thm:TestStatDistGeneral}, the 
asymptotic null distribution of $S$ is $N(0,1)$.
The traditional test under CR is valid, and no adjustment is needed.

\subsection{Rerandomization}\label{sec:HypoTestSpecialRR}

To balance the covariates across treatment groups, \citet{Morgan2012} propose RR, for which the procedure can be summarized as follows.
A balance criterion needs to be first specified to determine when randomization is acceptable.  
For example, the criterion could be defined as a threshold of $a>0$ on some user-defined imbalance measure, denoted as $M$.
The procedure then randomizes the units into treatment groups using CR until the criterion $M<a$ is satisfied.
The final randomization obtained is used to perform the experiment.

\citet{Morgan2012} choose the imbalance measure to be the 
Mahalanobis distance
\begin{align*}
M & =  (\bm{\bar{x}}_1 - \bm{\bar{x}}_2)^t 
\textup{Cov}(\bm{\bar{x}}_1 - \bm{\bar{x}}_2)^{-1}
(\bm{\bar{x}}_1 - \bm{\bar{x}}_2),
\end{align*}
where $\bm{\bar{x}}_1$ and $\bm{\bar{x}}_2$ are the sample means of the covariates 
in the two treatment groups.
Under the assumption of independent covariates, the Mahalanobis distance can be 
expressed as
\begin{align*}
M=\sum_{j=1}^{p+q}\left(
\frac{\sum_{i=1}^{n} (2T_i-1) x_{i,j}}
{\sqrt{n\textup{Var}(X_j)}}\right)^2+o_P(1).
\end{align*}
By the balance criterion $M<a$,
$
n^{-1/2}\sum_{i=1}^n{(2T_i-1)\bm{x}_i}\overset{d}{\rightarrow}
\bm{\Sigma}^{1/2}
\bm{D} \mid \bm{D}^t\bm{D}<a,
$
where $\bm{\Sigma}^{1/2}$ is the square root of $\bm{\Sigma}$, $\bm{D} \sim 
N(\bm{0},{\bf I}_{p+q})$ 
and ${\bf I}_{p+q}$ is the $(p+q)$-dimensional identity matrix.
Applying Theorem \ref{thm:TestStatDistGeneral}, we have the following results for testing treatment effect under RR.

\begin{theorem}
\label{thm:TestStatDistRR} Under RR, we have
	\begin{enumerate}
		\item
		Under $H_0: \mu_1-\mu_2=0$, then
		$
		S \overset{d}{\rightarrow} \lambda_1 Z + \lambda_2 
		\bm{\beta}_{\textup{ex}}^t\bm{\xi}^{\textup{RR}}_{\textup{ex}},
		$
		where  
		$\bm{\xi}^{\textup{RR}}_{\textup{ex}}$ 
		is the last $q$ dimensions of 
		$\bm{\xi}^{\textup{RR}}=\bm{\Sigma}^{1/2}
		\bm{D} \mid \bm{D}^t\bm{D}<a$.
		\item
		Under $H_1: \mu_1-\mu_2 \neq 0$, where $\mu_1-\mu_2=\delta/\sqrt{n}$ 
		for a fixed $\delta \neq 0$, then
		$
		S \overset{d}{\rightarrow} \lambda_1 Z + \lambda_2 
		\bm{\beta}_{\textup{ex}}^t\bm{\xi}^{\textup{RR}}
		_{\textup{ex}}+\lambda_2\delta/2.
		$
	\end{enumerate}
\end{theorem}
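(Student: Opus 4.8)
The plan is to read Theorem~\ref{thm:TestStatDistRR} as a direct corollary of Theorem~\ref{thm:TestStatDistGeneral}. The two displayed conclusions are \emph{verbatim} the $H_0$ and local-alternative statements of the general theorem, with the abstract limiting imbalance vector $\bm{\xi}$ replaced by the explicit $\bm{\xi}^{\textup{RR}}=\bm{\Sigma}^{1/2}\bm{D}\mid\bm{D}^t\bm{D}<a$. Hence all I really need to establish is that rerandomization satisfies Assumptions~\ref{assum:prop} and~\ref{assum:diff_means}, and that the limiting law in Assumption~\ref{assum:diff_means} is exactly this truncated Gaussian; the two conclusions then follow by substituting $\bm{\xi}=\bm{\xi}^{\textup{RR}}$ into the general result.

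First I would dispose of Assumption~\ref{assum:prop}. Rerandomization draws assignments by complete randomization with equal target allocation in Step~(3) and accepts only on the covariate-balance event $M<a$, which places no constraint on the treatment proportion; thus $n^{-1}\sum_{i=1}^n(2T_i-1)\overset{p}{\rightarrow}0$ is inherited from the CR limit (and is exact when RR is run with $n/2$ units per arm). The substantive step is Assumption~\ref{assum:diff_means}. Writing $\bm{W}_n=n^{-1/2}\sum_{i=1}^n(2T_i-1)\bm{x}_i$, under the underlying CR we have $\bm{W}_n\overset{d}{\rightarrow}N(\bm{0},\bm{\Sigma})$ from Section~\ref{sec:HypoTestSpecialCR}. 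Using the displayed expansion of the Mahalanobis distance, $M=\bm{W}_n^t\bm{\Sigma}^{-1}\bm{W}_n+o_P(1)=\Vert\bm{\Sigma}^{-1/2}\bm{W}_n\Vert^2+o_P(1)$, so by the continuous mapping theorem the pair $(\bm{W}_n,M)$ converges jointly to $(\bm{\Sigma}^{1/2}\bm{D},\bm{D}^t\bm{D})$ with $\bm{D}\sim N(\bm{0},{\bf I}_{p+q})$. Conditioning $\bm{W}_n$ on the acceptance event $\{M<a\}$ should then produce the truncated limit $\bm{\xi}^{\textup{RR}}=\bm{\Sigma}^{1/2}\bm{D}\mid\bm{D}^t\bm{D}<a$, which is precisely the law displayed just before the theorem.

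The main obstacle is justifying this last passage to the \emph{conditional} limiting law, since rerandomization samples $\bm{W}_n$ from its distribution given $\{M<a\}$ rather than unconditionally. What rescues the argument is that the limiting acceptance probability $\mathbb{P}(\bm{D}^t\bm{D}<a)>0$ is strictly positive and that the boundary sphere $\{\bm{D}^t\bm{D}=a\}$ carries zero Lebesgue mass, so the acceptance region is a continuity set of the limit. For any bounded continuous $f$ I would write $\mathbb{E}[f(\bm{W}_n)\mid M<a]=\mathbb{E}[f(\bm{W}_n)\,\mathbf{1}\{M<a\}]/\mathbb{P}(M<a)$ and pass to the limit in numerator and denominator separately, the numerator via the joint weak convergence of $(\bm{W}_n,M)$ together with continuity across $\{\bm{D}^t\bm{D}=a\}$, the denominator via $\mathbb{P}(M<a)\rightarrow\mathbb{P}(\bm{D}^t\bm{D}<a)$. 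This yields $\mathbb{E}[f(\bm{W}_n)\mid M<a]\rightarrow\mathbb{E}[f(\bm{\Sigma}^{1/2}\bm{D})\mid\bm{D}^t\bm{D}<a]$, identifying $\bm{\xi}^{\textup{RR}}$. With both assumptions verified and $\bm{\xi}=\bm{\xi}^{\textup{RR}}$ in hand, invoking Theorem~\ref{thm:TestStatDistGeneral} delivers both parts of Theorem~\ref{thm:TestStatDistRR} simultaneously.
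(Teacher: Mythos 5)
Your proposal is correct and follows essentially the same route as the paper: verify Assumptions~\ref{assum:prop} and~\ref{assum:diff_means} for rerandomization, identify $\bm{\xi}$ with the truncated Gaussian $\bm{\Sigma}^{1/2}\bm{D}\mid\bm{D}^t\bm{D}<a$ via the Mahalanobis-distance expansion, and then read off both conclusions from Theorem~\ref{thm:TestStatDistGeneral}. In fact you supply more detail than the paper does, which simply asserts the conditional limit of the imbalance vector without the continuity-set/joint-convergence justification you give.
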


Furthermore, the asymptotic variance of $S$ is 
\begin{align*}
\lambda_1^2+\lambda_2^2\bm{\beta}_{\textup{ex}}^t 
\textup{Var}(\bm{\xi}^{\textup{RR}}_{\textup{ex}}) 
\bm{\beta}_{\textup{ex}}
=\frac{\sigma_{\epsilon}^2+v_a 
	\sum_{j=1}^{q}\beta^2_{p+j}\textup{Var}(X_{p+j})}{\sigma_{\epsilon}^2+\sum_{j=1}^{q}
	\beta^2_{p+j} \textup{Var}(X_{p+j})},
\end{align*}
where $v_a<1$ is defined in \citet{Morgan2012}.
The asymptotic distribution of $S$ under RR is no longer normal, 
and it is more concentrated around 0 than the standard normal 
distribution, which indicates that the traditional test is more 
conservative.
The extent of this conservativeness is affected by the value of $v_a$, 
which is an increasingly monotonic function of $a$. 
By selecting a smaller value of $a$, the covariates are more 
balanced due to stricter balance criterion, resulting in 
a lower asymptotic variance of $S$.
However, a smaller $a$ means that on average it takes more attempts to meet 
the balance criterion.
Further discussion of the choice of $a$ can be found in \citet{Morgan2012}.

\subsection{Pairwise Sequential Randomization}\label{sec:HypoTestSpecialCAM}

Although RR can significantly reduce the covariate balance, it is unable to 
scale up for cases with a large number of covariates or units, 
which are almost ubiquitous in the era of big data.  
PSR, recently proposed by \citet{Qin2016}, solves this problem by sequentially and adaptively assigning units to various 
treatment groups and has shown superior performance in terms of 
covariate balance 
and variance of the estimated treatment effect.

Similar to RR, PSR chooses the Mahalanobis distance as the covariate 
imbalance measure because of its 
affinely invariant property and other desirable properties \citep{Morgan2012}.
The biased coin assignment \citep{Efron1971,Shao2010} is used to minimize the imbalance based on the Mahalanobis distance.
We refer the readers to \citet{Qin2016} for a detailed description of PSR with a comparison with RR.

Under PSR, it is shown in \cite{Qin2016} that
\begin{align}
\label{assum:diff_means_psr}
\sum_{i=1}^n{(2T_i-1)\bm{x}_i}=
\bm{O}_P\left(1\right).
\end{align}
We then have the following theorem.

\begin{theorem}
	\label{thm:TestStatDistCAM} Under PSR, we have
	\begin{enumerate}
		\item
		Under $H_0: \mu_1-\mu_2=0$, then
		\begin{align*}
		S \overset{d}{\rightarrow} 
		N\left(0,\frac{\sigma_{\epsilon}^2}
			{\sigma_{\epsilon}^2+\sum_{j=1}^{q}
			\beta^2_{p+j} \textup{Var}(X_{p+j})}\right).
		\end{align*}
		\item
		Under $H_1: \mu_1-\mu_2 \neq 0$, where $\mu_1-\mu_2=\delta/\sqrt{n}$ 
		for a fixed $\delta \neq 0$, then
		\begin{align*}
			S \overset{d}{\rightarrow} 
				N\left(\frac{1}{2}\lambda_2\delta,\frac{\sigma_{\epsilon}^2}
		{\sigma_{\epsilon}^2+\sum_{j=1}^{q}
			\beta^2_{p+j} \textup{Var}(X_{p+j})}\right).
		\end{align*}
	\end{enumerate}
\end{theorem}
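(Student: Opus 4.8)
The plan is to obtain both parts of Theorem~\ref{thm:TestStatDistCAM} as a direct specialization of the general result in Theorem~\ref{thm:TestStatDistGeneral}. The key observation is that the balance property~\eqref{assum:diff_means_psr} established for PSR in \citet{Qin2016} is strictly stronger than Assumption~\ref{assum:diff_means}: since $\sum_{i=1}^n (2T_i-1)\bm{x}_i = \bm{O}_P(1)$, dividing by $\sqrt{n}$ forces $n^{-1/2}\sum_{i=1}^n (2T_i-1)\bm{x}_i = \bm{O}_P(n^{-1/2}) \overset{p}{\rightarrow}\bm{0}$. Hence Assumption~\ref{assum:diff_means} holds with the limiting random vector $\bm{\xi}$ degenerate at $\bm{0}$; in particular $\bm{\xi}_{\textup{ex}}=\bm{0}$. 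Once this is in place, the covariate-imbalance component of the asymptotic distribution vanishes and the stated normal laws follow.

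First I would verify that PSR satisfies the two assumptions required by Theorem~\ref{thm:TestStatDistGeneral}. Assumption~\ref{assum:prop} is immediate from the construction of the procedure: units are allocated in pairs with $T_{2i+2}=1-T_{2i+1}$, so for even $n$ the groups are exactly balanced and $\sum_{i=1}^n (2T_i-1)=0$, while for odd $n$ the single leftover unit contributes at most $\pm 1$; in either case $n^{-1}\sum_{i=1}^n(2T_i-1)\overset{p}{\rightarrow}0$. Assumption~\ref{assum:diff_means} follows from~\eqref{assum:diff_means_psr} as above, with $\bm{\xi}=\bm{0}$ and trivially $\mathbb{E}[\bm{\xi}]=\bm{0}$.

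With both assumptions verified, I would invoke Theorem~\ref{thm:TestStatDistGeneral} directly. Under $H_0$ its conclusion reads $S \overset{d}{\rightarrow}\lambda_1 Z + \lambda_2\bm{\beta}_{\textup{ex}}^t\bm{\xi}_{\textup{ex}}$; substituting $\bm{\xi}_{\textup{ex}}=\bm{0}$ leaves $S \overset{d}{\rightarrow}\lambda_1 Z \sim N(0,\lambda_1^2)$. It then remains to rewrite the variance: by definition $\lambda_1=\sigma_{\epsilon}/\sigma_{w}$ with $\sigma_{w}^2=\sigma_{\epsilon}^2+\sum_{j=1}^q\beta_{p+j}^2\textup{Var}(X_{p+j})$, so $\lambda_1^2=\sigma_{\epsilon}^2/(\sigma_{\epsilon}^2+\sum_{j=1}^q\beta_{p+j}^2\textup{Var}(X_{p+j}))$, which is exactly the asserted variance. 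For the local-alternative case, the same substitution into part~(2) of Theorem~\ref{thm:TestStatDistGeneral} retains the deterministic drift, giving $S \overset{d}{\rightarrow}\lambda_1 Z + \tfrac{1}{2}\lambda_2\delta \sim N(\tfrac{1}{2}\lambda_2\delta,\lambda_1^2)$, as claimed.

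Because the heavy analytic work is already carried by Theorem~\ref{thm:TestStatDistGeneral} and by the cited bound~\eqref{assum:diff_means_psr}, the only genuine step is recognizing that PSR drives $\bm{\xi}$ to the degenerate law at $\bm{0}$. The most delicate point to check is therefore that this degenerate limit is a legitimate instance of Assumption~\ref{assum:diff_means}, so that convergence in distribution to the constant $\bm{0}$, combined with the independent $\lambda_1 Z$ term via Slutsky's theorem, yields the stated normal distribution; beyond this I do not anticipate a substantive obstacle. As a cross-check, one could bypass the general theorem and argue from the representation in Corollary~\ref{thm:cor:muDist}: the term $2n^{-1/2}\sum_{i=1}^n(2T_i-1)\sum_{j=1}^q\beta_{p+j}x_{i,p+j}$ is $O_P(n^{-1/2})$ by~\eqref{assum:diff_means_psr}, while $2n^{-1/2}\sum_{i=1}^n(2T_i-1)\epsilon_i\overset{d}{\rightarrow}N(0,4\sigma_{\epsilon}^2)$ since $(2T_i-1)^2=1$ and the assignment mechanism ignores the responses; dividing by the consistent standard error $2\sigma_{w}/\sqrt{n}$ reproduces the same limits.
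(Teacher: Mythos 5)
Your proposal is correct and follows essentially the same route as the paper: the paper also derives Theorem \ref{thm:TestStatDistCAM} by noting that the PSR bound \eqref{assum:diff_means_psr} forces $n^{-1/2}\sum_{i=1}^n(2T_i-1)\bm{x}_i\overset{p}{\rightarrow}\bm{0}$, so Assumption \ref{assum:diff_means} holds with $\bm{\xi}=\bm{0}$ and Theorem \ref{thm:TestStatDistGeneral} yields $S\overset{d}{\rightarrow}\lambda_1 Z$ (plus the drift $\tfrac{1}{2}\lambda_2\delta$ under the local alternative), with $\lambda_1^2=\sigma_\epsilon^2/\sigma_w^2$ giving the stated variance. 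Your verification that the degenerate limit is a legitimate instance of Assumption \ref{assum:diff_means} and your cross-check via Corollary \ref{thm:cor:muDist} are sound and consistent with the paper's treatment.
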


The variance from the covariates is completely eliminated from the numerator 
of the asymptotic distribution of $S$, resulting in a distribution more 
concentrated around 0 than the standard normal distribution. 
In fact, the conclusion under PSR can be extended to a large class of 
CAR if Assumption \eqref{assum:diff_means} is 
replaced by \eqref{assum:diff_means_psr}.
This can be considered as a natural extension of the conditions proposed in 
\citet{Ma2015} that lead to conservative tests for covariate-adaptive designs that
balance discrete covariates. 
Note that condition \eqref{assum:diff_means_psr} is quite strong and is not 
a necessary condition for a conservative test. 
For example, the condition is not satisfied under RR, whereas the test is also 
conservative as shown in Section \ref{sec:HypoTestSpecialRR}.

\subsection{Atkinson's $D_A$-Biased Coin Design}\label{sec:HypoTestSpecialDABCD}

Atkinson's $D_A$-BCD is proposed to 
balance allocations across covariates to minimize the variance of 
estimated treatment effects when a classical linear model is assumed
between a response and the covariates \citep{Atkinson1982, Smith1984, 
Smith1984b}.
Unlike RR, it is used in settings in which covariate information 
is collected sequentially, such as in clinical trials.

$D_A$-BCD sequentially assigns units to treatment groups with an adaptive 
allocation probability:
suppose $n$ units have been assigned to treatment groups, 
$D_A$-BCD assigns the $(n+1)$-th unit to treatment 1 with 
probability 
\begin{align*}
\frac{[1-(1;\bm{x}_{n+1}^t)({\bf F}_n^t{\bf F}_n)^{-1}\bm{b}_n]^2}
{[1-(1;\bm{x}_{n+1}^t)({\bf F}_n^t{\bf F}_n)^{-1}\bm{b}_n]^2+
	[1+(1;\bm{x}_{n+1}^t)({\bf F}_n^t{\bf F}_n)^{-1}\bm{b}_n]^2},
\end{align*}
where ${\bf F}_n=[\bm{1}_n;{\bf X}]$ and 
$\bm{b}_n^t=(2\bm{T}-\bm{1}_n)^t{\bf F}_n$.

Applying result (10.5) of \citet{Smith1984b}, we obtain that
$
n^{-1/2}\sum_{i=1}^{n}{(2T_i-1)\bm{x}_i}\overset{d}{\rightarrow}
N(\bm{0},\bm{\Sigma}/5).
$
It is clear to see that under $D_A$-BCD the variance of $\sum_{i=1}^{n}(2T_i-1)\bm{x}_i$ is reduced to $1/5$ of that under 
CR, which indicates that covariates are more balanced than under CR. 
The next theorem states the asymptotic distributions of $S$ under $D_A$-BCD.

\begin{theorem}
	\label{thm:TestStatDistDABCD} Under $D_A$-BCD, we have
	\begin{enumerate}
		\item
		Under $H_0: \mu_1-\mu_2=0$, then
		\begin{align*}
		S \overset{d}{\rightarrow} 
		N\left(0,\frac{\sigma_{\epsilon}^2+\frac{1}{5} 
			\sum_{j=1}^{q}\beta^2_{p+j}\textup{Var}(X_{p+j})}
		{\sigma_{\epsilon}^2+\sum_{j=1}^{q}
			\beta^2_{p+j} \textup{Var}(X_{p+j})}\right).
		\end{align*}
		\item
		Under $H_1: \mu_1-\mu_2 \neq 0$, where $\mu_1-\mu_2=\delta/\sqrt{n}$ 
		for a fixed $\delta \neq 0$, then
		\begin{align*}
		S \overset{d}{\rightarrow} 
		N\left(\frac{1}{2}\lambda_2\delta,\frac{\sigma_{\epsilon}^2+\frac{1}{5} 
			\sum_{j=1}^{q}\beta^2_{p+j}\textup{Var}(X_{p+j})}
		{\sigma_{\epsilon}^2+\sum_{j=1}^{q}
			\beta^2_{p+j} \textup{Var}(X_{p+j})}\right).
		\end{align*}
	\end{enumerate}
\end{theorem}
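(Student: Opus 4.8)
The plan is to recognize that $D_A$-BCD fits the general framework of Section~\ref{sec:HypoTestGeneral}, so that Theorem~\ref{thm:TestStatDistGeneral} applies directly once the two assumptions are verified with the correct limiting object $\bm{\xi}$. First I would check Assumption~\ref{assum:prop}: since $D_A$-BCD targets equal proportions, $n^{-1}\sum_{i=1}^n(2T_i-1)\overset{p}{\rightarrow}0$. Next I would verify Assumption~\ref{assum:diff_means} using the displayed consequence of result~(10.5) of \citet{Smith1984b}, namely
\begin{align*}
\frac{\sum_{i=1}^{n}(2T_i-1)\bm{x}_i}{\sqrt{n}}\overset{d}{\rightarrow}\bm{\xi}\sim N\!\left(\bm{0},\tfrac{1}{5}\bm{\Sigma}\right).
\end{align*}
This identifies $\bm{\xi}$ as Gaussian, and because the design uses only covariate information and is response-independent, the limiting $\bm{\xi}$ is independent of the standard normal $Z$ arising from the errors, exactly as required in the hypotheses of Theorem~\ref{thm:TestStatDistGeneral}.

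With the assumptions in hand, the remaining work is a Gaussian computation. Partitioning $\bm{\xi}=(\bm{\xi}_{\textup{in}}^t,\bm{\xi}_{\textup{ex}}^t)^t$ and using the diagonal (independent-covariate) structure of $\bm{\Sigma}$, the last $q$ coordinates satisfy $\bm{\xi}_{\textup{ex}}\sim N(\bm{0},\tfrac{1}{5}\bm{\Sigma}_{\textup{ex}})$, where $\bm{\Sigma}_{\textup{ex}}=\textup{diag}(\textup{Var}(X_{p+1}),\ldots,\textup{Var}(X_{p+q}))$. Hence $\bm{\beta}_{\textup{ex}}^t\bm{\xi}_{\textup{ex}}$ is univariate normal with mean $0$ and variance $\tfrac{1}{5}\sum_{j=1}^{q}\beta_{p+j}^2\textup{Var}(X_{p+j})$. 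Applying Theorem~\ref{thm:TestStatDistGeneral}(1) under $H_0$, the limit $\lambda_1 Z+\lambda_2\bm{\beta}_{\textup{ex}}^t\bm{\xi}_{\textup{ex}}$ is a sum of two independent normals and is therefore normal with mean $0$ and variance
\begin{align*}
\lambda_1^2+\tfrac{1}{5}\lambda_2^2\sum_{j=1}^{q}\beta_{p+j}^2\textup{Var}(X_{p+j})
=\frac{\sigma_{\epsilon}^2+\tfrac{1}{5}\sum_{j=1}^{q}\beta_{p+j}^2\textup{Var}(X_{p+j})}{\sigma_{\epsilon}^2+\sum_{j=1}^{q}\beta_{p+j}^2\textup{Var}(X_{p+j})},
\end{align*}
after substituting $\lambda_1=\sigma_{\epsilon}/\sigma_w$, $\lambda_2=1/\sigma_w$ and $\sigma_w^2=\sigma_{\epsilon}^2+\sum_{j=1}^{q}\beta_{p+j}^2\textup{Var}(X_{p+j})$. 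This gives part~(1).

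For part~(2), the local-alternatives case, Theorem~\ref{thm:TestStatDistGeneral}(2) adds the deterministic shift $\tfrac{1}{2}\lambda_2\delta$ to the same limiting variable, which translates the mean by $\tfrac{1}{2}\lambda_2\delta$ while leaving the variance unchanged; this yields the stated $N(\tfrac{1}{2}\lambda_2\delta,\cdot)$ limit. The only genuinely substantive step is the verification of Assumption~\ref{assum:diff_means} with covariance $\tfrac{1}{5}\bm{\Sigma}$, which rests on the asymptotic theory of $D_A$-BCD in \citet{Smith1984b}; everything downstream is the convolution of two independent Gaussians followed by algebraic simplification. I therefore expect no real obstacle beyond correctly importing Smith's limit result, and confirming that it is precisely the \emph{normality} of $\bm{\xi}$ here that upgrades the general (possibly non-normal) limit of Theorem~\ref{thm:TestStatDistGeneral} to an exact normal distribution, in contrast to the rerandomization case.
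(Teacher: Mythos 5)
Your proposal is correct and follows exactly the route the paper takes: the paper derives this theorem by importing the limit $n^{-1/2}\sum_{i=1}^{n}(2T_i-1)\bm{x}_i \overset{d}{\rightarrow} N(\bm{0},\tfrac{1}{5}\bm{\Sigma})$ from result (10.5) of Smith (1984b) to verify Assumption \ref{assum:diff_means}, and then applies Theorem \ref{thm:TestStatDistGeneral} together with the normality of $\bm{\beta}_{\textup{ex}}^t\bm{\xi}_{\textup{ex}}$ and the independence of $Z$ and $\bm{\xi}_{\textup{ex}}$ (established in the proof of Corollary \ref{thm:cor:muDist}) to obtain the stated Gaussian limits. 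Your variance computation and the observation that it is the normality of $\bm{\xi}$ that yields an exact normal limit, unlike the rerandomization case, match the paper's reasoning.
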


This theorem shows that the test statistic $S$ has asymptotic variance of less than 1, so the test is conservative with a reduced type I error for 
testing the treatment effect. 

Based on the above four randomization procedures, our findings are as follows.
(i) Under CR, the distribution
of $S$ remains asymptotically standard normal, so it provides the correct type I error because CR does not use covariate information 
at the assignment stage. 
(ii) Under the other three procedures (RR, $D_A$-BCD, and PSR), the asymptotic distributions of $S$ are not standard
normal; therefore, their type I errors (based on $S$) are no longer correct. 
In the next section, we discuss the correction of type I error for CAR 
procedures, and we may then compare their adjusted powers. 

We may also apply the general theorems 
(in Section \ref{sec:HypoTestGeneral}) to other CAR procedures. 
In particular, stratified block randomization and Pocock and Simon's minimization method are two of the most popular CAR designs.
To deal with continuous covariates, discretization is required to implement these methods.
Let $d_j(X_j)$ be the discretized version of $X_j$, $j=1,...,p+q$,  that is used in randomization.
\cite{Ma2015} showed that Assumptions \ref{assum:prop} and \ref{assum:diff_means} are satisfied for both methods where $\bm{\xi}$ is a $(p+q)$-dimensional normal distribution with mean zero and a diagonal covariance matrix with the $j$-th element being $\mathbb{E}[\textup{Var}(X_j \mid d(X_j))]$.

\section{Correction for Conservativeness}\label{sec:correction}

It can be seen that most of the CAR procedures lead to 
conservative type I error when testing the treatment effect because the traditional 
tests use a standard normal distribution as the null distribution.  
Therefore, we propose the following approach to correct the conservative type I 
errors and to obtain higher powers.

Because we derived the asymptotic distribution of $S$ in 
Theorem \ref{thm:TestStatDistGeneral}, 
we can obtain the correct asymptotic critical values.
However, because the asymptotic distribution depends on unknown parameters, they must be estimated using the observed sample to obtain the approximated null 
distribution and adjust the corresponding critical values and p-values.

After adjusting the critical values and p-values, more 
powerful hypothesis testing results can be obtained. The more conservative the traditional 
tests are, the more powerful their corrected versions become.  
Finally, we compare the CAR procedures mentioned 
above in terms of covariate balance, conservativeness of the traditional tests, 
and powers of the corrected test, and their advantages and 
disadvantages are summarized in Table \ref{tab:summary}.  
The conclusions in the table are further verified via simulation in the next section.

An alternative approach to preserving the type I error rate is the randomization test \citep{Simon1979}.
By fixing the covariate values and responses, it simulates the treatment assignments with the corresponding CAR procedure.
The approximate null distribution of the test statistic can then be generated, upon which the p-value is determined.
Unlike the model-based tests, the randomization test requires no distributional assumptions, although it is more computationally intensive and may lead to a loss of power.
This topic is discussed further in \cite{Rosenberger2015} and the references therein.

\addtolength{\tabcolsep}{-6.5pt} 
\begin{table}
\begin{center}
\begin{tabular}{c c c c}
                 Randomization & Covariate balance & Type I error of traditional
                 test & Power of corrected test\\
  \hline
                CR      & least balanced & valid  & least powerful \\
                RR     & moderately balanced & moderately conservative & 
                moderately 
                powerful \\
                $D_A$-BCD      & moderately balanced & moderately conservative 
                & 
                moderately powerful \\
                PSR      & most balanced & most conservative & most powerful \\
   \hline
\end{tabular}
\caption{Comparison of covariate-adaptive randomization procedures in 
terms of the covariate 
balance, the traditional tests' conservativeness, and the corrected tests' powers.}
\label{tab:summary}
\end{center}
\end{table}
\addtolength{\tabcolsep}{6.5pt}

\section{Numerical Studies} \label{sec:Simulation}
\subsection{Verification of Theoretical Results}

We first verify the theoretical asymptotic distribution of $S$ under CR, RR, 
PSR, and $D_A$-BCD.  
Assume the underlying model is 
$Y_{i}=\mu_1T_i+\mu_2(1-T_i)+\sum_{j=1}^{4}\beta_j 
x_{i,j}+\epsilon_{i}$ where $\mu_1=\mu_2=0$, $\beta_j=1$ for $j=1,...,4$. 
$x_{i,j} \sim N(0, 1)$ for $j=1,...,4$, 
and the covariates are independent of each other.  
The random error $\epsilon_{i} \sim N(0, 2^2)$ is independent 
of all $x_{i,j}$.
We simulate the data according to the underlying model with a sample size $n=500$ 
and use the working model that includes only two of the four covariates, 
$\mathbb{E} [Y_i]=\mu_1T_i+\mu_2(1-T_i)+\beta_1 x_{i,1}+\beta_2 x_{i,2}$, 
to obtain the test statistic $S$.
In Figure \ref{fig:compare_dist}, we plot the simulated distributions of $S$ 
along with the theoretical distributions of $S$ given by Theorem 
\ref{thm:TestStatDistGeneral}.
As the figure shows, the theoretical distributions are very close to the 
simulated distributions for all randomization procedures, which verifies our 
theoretical results.
For comparison, the standard normal distribution is plotted in bold gray.
As we move from the left panel to the right panel (i.e., from CR 
to PSR), the distribution of $S$ becomes narrower.  
Therefore, the use of the critical values or p-values obtained from the standard 
normal distribution will result in conservative tests with reduced type I 
errors for RR, PSR, and $D_A$-BCD.  
The correction for such conservativeness is further illustrated in the 
following sections.
\begin{figure} \centering
	\includegraphics[scale=0.55]{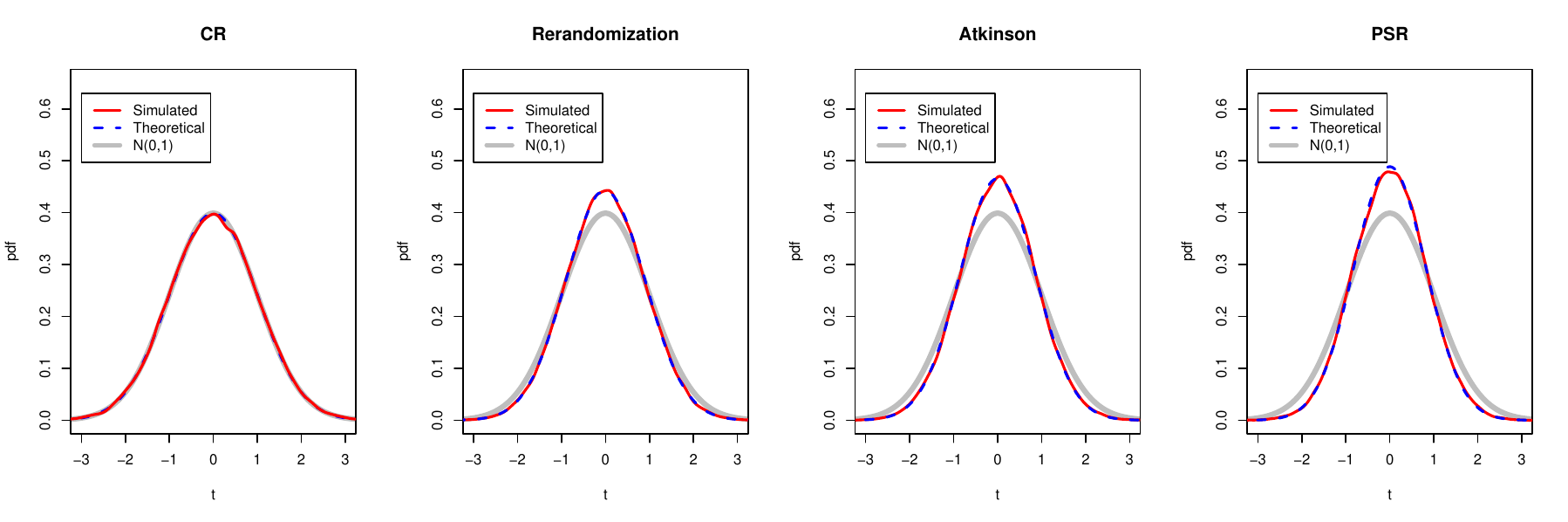}
	\caption{Comparison of theoretical distributions and simulated 
	distributions of $S$.  
	From left to right: CR, RR, $D_A$-BCD, and PSR.  
	In each panel, red solid curve represents the simulated distribution, 
	blue dashed curve represents the theoretical distribution, 
	and bold gray curve is the standard normal density.} 
\label{fig:compare_dist}
\end{figure}

\subsection{Conservative Hypothesis Testing for Treatment Effect}\label{sec:SimulationConservativeTest}

The previous sections showed that the traditional test for treatment 
effect under most CAR procedures generates 
conservative results.  
In this section, we verify this phenomenon. 
Suppose the underlying model is
\begin{align}\label{eq:simu_true_model}
Y_{i}=\mu_1T_i+\mu_2(1-T_i)+\sum_{j=1}^{6}\beta_j x_{i,j}+\epsilon_{i},
\end{align}
where $\beta_j=1$ for $j=1,...6$.
$x_{i,j} \sim N(0, 1)$, and the covariates are independent of each other. 
The random error $\epsilon_{i} \sim N(0, 2^2)$ is 
independent of all $x_{i,j}$.
We use the following four working models to test the treatment effect, i.e., 
$H_0: \mu_1=\mu_2$ and $H_1: \mu_1 \neq \mu_2$.

W1: $\mathbb{E} [Y_i]=\mu_1T_i+\mu_2(1-T_i)$.

W2: $\mathbb{E} [Y_i]=\mu_1T_i+\mu_2(1-T_i)+\sum_{j=1}^{2}\beta_j 
x_{i,j}$.

W3: $\mathbb{E} [Y_i]=\mu_1T_i+\mu_2(1-T_i)+\sum_{j=3}^{6}\beta_j 
x_{i,j}$.

W4: $\mathbb{E} [Y_i]=\mu_1T_i+\mu_2(1-T_i)+\sum_{j=1}^{6}\beta_j 
x_{i,j}$.

Note that the first working model is equivalent to the two-sample t-test, and the last 
working model is the same as the underlying model.
We simulate data according to \eqref{eq:simu_true_model} with 
$\mu_1-\mu_2=0$ and $\mu_1-\mu_2=0.3$, respectively, to obtain the type I errors and powers of the traditional tests; the sample size is $n=500$.
The results are shown in the column ``Traditional test'' in Table \ref{tab:trt_effect}.
Under CR, all working models provide correct type I 
errors.
However, under RR, $D_A$-BCD, and PSR, W1, W2, and W3 generate conservative type I errors below 5\%, with PSR being the most conservative.
These results show that CAR leads to conservative 
results for traditional tests of treatment effect.
The more balanced covariates provided by the randomization 
procedures, 
the more conservative the tests become.
As the type I errors are conservative, the powers of RR, 
$D_A$-BCD, and PSR are also affected.

\addtolength{\tabcolsep}{-1pt} 
\begin{table}
	\begin{center}
		\begin{tabular}{cccccccccc}
			\hline
			&  & \multicolumn{4}{c}{Traditional test} &  \multicolumn{4}{c}{Corrected test} \\
			$\mu_1-\mu_2$&	Randomization & W1 & W2 & W3 & W4 & W1 & W2 & W3 & W4 \\
			\hline
			0&	CR       			    & 0.053 & 0.051 & 0.054 & 0.051
											 & 0.048 & 0.050 & 0.046 & 0.045\\
			&	RR          			 & 0.011 & 0.017 & 0.026 & 0.050
										     & 0.051 & 0.050 & 0.052 & 0.051\\
			&	$D_A$-BCD   	  & 0.007 & 0.012 & 0.025 & 0.053
										     & 0.051 & 0.052 & 0.053 & 0.051\\
			&	PSR                     & 0.002 & 0.006 & 0.018 & 0.052
										     & 0.060 & 0.058 & 0.050 & 0.048 \\
			\hline
		0.3&	CR       			    & 0.179 & 0.215 & 0.273 & 0.387
											 & 0.194 & 0.238 & 0.272 & 0.414\\
			&	RR          			 & 0.126 & 0.171 & 0.248 & 0.384
											 & 0.285 & 0.289 & 0.357 & 0.387\\
			&	$D_A$-BCD   	   & 0.112 & 0.155 & 0.244 & 0.386
										     & 0.317 & 0.337 & 0.365 & 0.386\\
			&	PSR                     & 0.087 & 0.140 & 0.235 & 0.386
											 & 0.421 & 0.413 & 0.408 & 0.394 \\
			\hline
		\end{tabular}
		\caption{Type I error and power ($\mu_1-\mu_2=0.3$) of traditional test and corrected test for treatment effect under various working models and randomization procedures.}
		\label{tab:trt_effect}
	\end{center}
\end{table}
\addtolength{\tabcolsep}{1pt}

\subsection{Corrected Hypothesis Testing for Treatment Effect}\label{sec:SimulationCorrectedTest}

In practice, to correct the type I error and obtain higher powers,
we can estimate the critical values and p-values based on the estimated 
asymptotic distribution (Section \ref{sec:correction}).  
Using this approach, we repeat the simulations in the previous 
section; the type I errors and the powers for $\mu_1-\mu_2=0.3$ are presented in the column ``Corrected test'' in Table \ref{tab:trt_effect}.
We also simulate the powers for various $\mu_1-\mu_2$ with values in the range from 0 to 1 and plot the results in Figure \ref{fig:power_asym_n500}.
Additional simulation results with a sample size $n=200$ are provided in the supplementary materials.

\begin{figure} \centering
\includegraphics[scale=0.55]{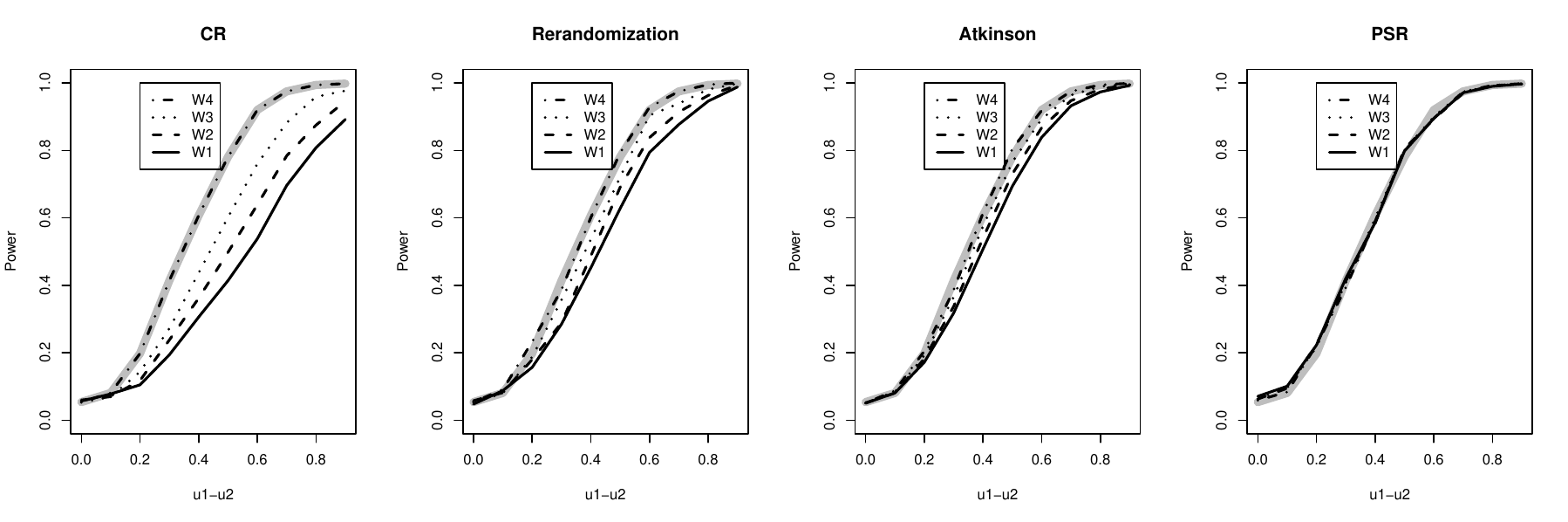}
\caption{Power against $\mu_1-\mu_2$ using 
estimated asymptotic distribution's 
critical values and p-values.  
Sample size $n=500$.  
From left to right: 
CR, 
RR with $a=3$, 
$D_A$-BCD, 
and PSR.
Note that the power of W4 under CR is plotted in bold gray curves in all panels to better allow comparison among randomizations.
}
\label{fig:power_asym_n500}
\end{figure}

Table \ref{tab:trt_effect} shows that all type I errors of the corrected tests are 
successfully controlled at 5\%, which means that the proposed approach works well.
In Figure \ref{fig:power_asym_n500}, as $\mu_1-\mu_2$ increases 
away from 0, the 
powers generally increase.  
However, under CR, different working models provide different powers. 
The more covariates included in the working model, the higher the power.  
CR cannot balance the covariate well, and the covariates not 
included in the working model will affect the test for treatment effect.  
In contrast, under PSR, all working models provide similar powers because 
PSR can balance all covariates well.  
Because RR and $D_A$-BCD can also balance the covariates, but not as well as PSR, their powers are slightly better than that of CR but much worse than that of PSR.

\subsection{Hypothesis Testing for Covariate Effect}

\begin{table}
\begin{center}
\begin{tabular}{c c c}
  \hline
                   Randomization & W3 & W4 \\

  \hline
                CR      & 0.053 & 0.053\\
                RR      & 0.052 & 0.051\\
                $D_A$-BCD   & 0.049 & 0.043\\
                PSR     & 0.051 & 0.050\\
   \hline
\end{tabular}
\caption{Type I error of hypothesis testing for covariate effect $H_0: 
\beta_3=0$ using 
unadjusted critical values under various working models and randomization procedures.}
\label{tab:cov_effect_size}
\end{center}
\end{table}

Finally, we compare the performance of the traditional test for the third 
covariate effect, i.e., $H_0: \beta_3=0$ and $H_1: 
\beta_3 \neq 0$.  
We adopt the same setting from the previous section and choose a range of 
values from 0 to 1 for $\beta_3$ to calculate the power under various working 
models.
Note that in this case, only W3 and W4 contain the third covariate.
The type I errors are shown in Table \ref{tab:cov_effect_size}, and the powers are shown in 
Figures \ref{fig:power_cov_n500}.
The type I errors are all controlled at 5\%, which is consistent 
with our theoretical results in Theorem 
\ref{thm:TestStatDistBeta}.
In other words, no correction is needed to test the covariate effect.
In contrast, Figure \ref{fig:power_cov_n500} shows that the powers are reduced if the working model does not include all covariates.
This is again consistent with the results in Theorem 
\ref{thm:TestStatDistBeta}.  
It is worthwhile to note that the performance of the hypothesis testing for the 
covariate effect does not depend on the choice of randomization procedure, as 
all panels of Figures \ref{fig:power_cov_n500} are nearly identical.

\begin{figure} \centering
\includegraphics[scale=0.55]{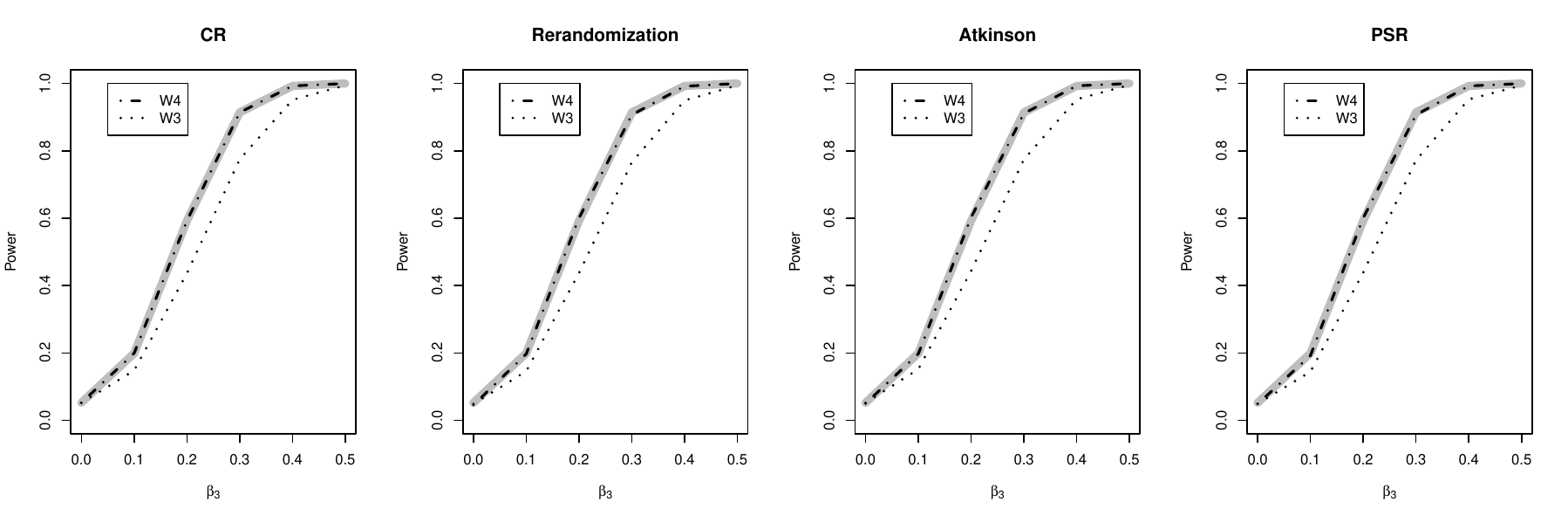}
\caption{Power against $\beta_3$.  
Sample size $n=500$.  
From left to right: 
CR, 
RR with $a=3$, 
$D_A$-BCD, 
and PSR.
Note that the power of W4 under CR is plotted in bold gray curves in all panels to better allow comparison among randomizations.
}
\label{fig:power_cov_n500}
\end{figure}

\section{Real Data Example}
\label{sec:RealData}

We present a real data example to illustrate the use of the proposed method.  The example is based on a randomized, double-blind, parallel-group comparison clinical trial study on the effects of gliclazide and pioglitazone on metabolic control in drug-na\"ive patients with type 2 diabetes mellitus \citep{Charbonnel2005}. The outcome variable is the decrease in HbA1C level after 52 weeks of treatment. The patients' covariates include BMI, age, baseline fasting insulin level, and baseline fasting glucose level, all of which are continuous.  We use some summary statistics of the majority of patients in the study to create a synthetic dataset of patients by simulating their covariates and outcomes with a sample size of $n=850$ via the following fitted linear regression
\begin{align*}
\textup{HbA1C} \ =&\quad 0.084*T-0.039*(1-T) + 0.015*\textup{BMI} + 
0.001*\textup{Age} \\
&-0.001*\textup{BaselineFastingInsulin} + 
0.094*\textup{BaselineFastingGlucose} + \epsilon.
\end{align*}
We assume $\epsilon \sim N(0,0.6215^2)$ in this example. Here, the treatment effect of pioglitazone ($T=1$) over gliclazide ($T=0$) is 0.123.

For comparison, we reassign these patients to the two treatment groups with various randomization procedures, including CR, PS, $D_A$-BCD, and PSR.
We also include stratified block randomization (SB) and Pocock and Simon's minimization method (PS), both of which can be applied to discretized covariates.
After the assignment, we simulate outcome variables for each patient with the fitted regression.  We further conduct hypothesis testing for the treatment effect using the working model with all covariates and the model with no covariates.  The tests include the traditional test, the corrected test, and the randomization test.
Each test is simulated 10,000 times.
The results are presented in Table \ref{tab:power_comp}.

\begin{table}
\begin{center}
	\begin{tabular}{ccccccc}
		\hline
		& \multicolumn{2}{c}{Traditional test} & \multicolumn{2}{c}{Corrected test} & \multicolumn{2}{c}{Randomization test} \\
		Randomization & All cov. & No cov. &  All cov. & No cov. & All cov. & No cov. \\
		\hline
		CR          & 0.821 & 0.743 & 0.821 & 0.743 & 0.816 & 0.742 \\
		SB         & 0.819 & 0.759 & 0.819 & 0.796 & 0.816 & 0.791 \\
		PS          & 0.820 & 0.761 & 0.821 & 0.797 & 0.820 & 0.793 \\
		RR          & 0.817 & 0.766 & 0.816 & 0.814 & 0.815 & 0.791 \\
		$D_A$-BCD   & 0.818 & 0.761 & 0.818 & 0.807 & 0.816 & 0.803 \\
		PSR         & 0.858 & 0.803 & 0.858 & 0.858 & 0.853 & 0.852 \\
		\hline
	\end{tabular}
	\caption{Comparison of power of traditional test, corrected test, and randomization test under various working models (i.e., with all covariates and with no covariates) and randomization procedures.}
	\label{tab:power_comp}
\end{center}
\end{table}

The table shows that with the traditional test and the working model that includes all covariates, the powers under various randomization procedures are all very high.  However, if the working model does not include the covariates used in randomization, the powers are degraded.  
PSR shows the least degradation, $D_A$-BCD and RR show moderate degradation, and CR shows the most degradation.  
The degradations are also significant for SB and PS. This evidence shows the importance of adjusting the hypothesis testing procedure.

In contrast, the degradation is less significant using the corrected test with the proposed method.
In fact, PSR's corrected power under the working model with no covariates is the same as that of the working model with all covariates, which is consistent with the theoretical results.  $D_A$-BCD and RR also benefit significantly from the corrected tests.  Note that our proposed method also works with SB and PS; thus, we can also provide correction for them and obtain clear improvement.  

Finally, the power of the randomization test is very similar and slightly lower than that of the corrected test but much higher than that of the traditional test under various CAR procedures when no covariates are used in the analysis. 
The results also show that better covariate balance improves the power of the randomization test.
In particular, under PSR, the randomization test has the highest power and appears equivalent to the model in which all covariates are adjusted. 
Note that the validity of the corrected test depends on the correctness of model assumptions (e.g., linearly additive covariate effects, no time trend), while the randomization test is almost assumption-free. Thus, the randomization test following CAR is a useful alternative to the model-based methods.

\section{Extension to Logistic Regression Models}
\label{sec:Logistic}

The framework can be extended to more general models than linear regression.
Here, we briefly explore the extension to logistic regression models.

Consider the same setting and notation as in Section \ref{sec:Framework}, except for the outcome $Y_i$ that is a binary variable with possible values 0 and 1.
A logistic regression model relating $Y_i$ to treatment assignment $T_i$ and covariate $\bm{x}_i$ is given by
\begin{align}\label{eq:true_model_log}
\mathbb{E} [Y_i \mid T_i,\bm{x}_i]&= h\Big(\mu_1T_i + \mu_2(1-T_i) + \bm{\beta}^t\bm{x}_i \Big),
\end{align}
with $h(t)=\exp(t)/[1+\exp(t)]$.

To test the treatment effect 
$
H_0:\mu_1-\mu_2=0 \text{ versus }H_1:\mu_1-\mu_2 \neq 0,
$
it may be possible to carry out logistic regression by including all covariates, which is valid if the model \eqref{eq:true_model_log} is correctly specified \citep{Shao2010}.
As discussed in Section \ref{sec:Introduction}, however, the covariates are commonly excluded from analysis under CAR.
For illustration, we consider the special case of a working model with no adjustment for covariates,
\begin{align}\label{eq:working_model_log}
\mathbb{E} [Y_i \mid T_i]=h\Big(\mu_1T_i + \mu_2(1-T_i) \Big).
\end{align}
The cases with partial covariates are evaluated via simulation.
Under the working model \eqref{eq:working_model_log}, the maximum likelihood estimators (MLE) of $\mu_1$ and $\mu_2$ are given by
\begin{align*}
\hat{\mu}_1=h^{-1}\left(\frac{\sum_{i=1}^{n} T_iY_i}{n_1}\right), \ 
\hat{\mu}_2=h^{-1}\left(\frac{\sum_{i=1}^{n} (1-T_i)Y_i}{n_2}\right),
\end{align*}
where $n_1=\sum_{i=1}^{n}T_i$ and $n_2=\sum_{i=1}^{n}(1-T_i)$.
The usual Wald test statistic is then
\begin{align*}
S_L=\frac{\hat{\mu}_1-\hat{\mu}_2}{\sqrt{\widehat{\textup{Var}}(\hat{\mu}_1-\hat{\mu}_2)}},
\end{align*}
where $\widehat{\textup{Var}}(\hat{\mu}_1-\hat{\mu}_2)$ denotes the estimated variance of $(\hat{\mu}_1-\hat{\mu}_2)$ derived from the inverse of the estimated Fisher information matrix.
The null hypothesis is rejected at the significance level $\alpha$ if $|S_L|>z_{1-\alpha/2}$.

The primary goal here is to investigate the type I error rate under the working model \eqref{eq:working_model_log}.
To do so, we introduce another assumption on the covariate balance.
\begin{assumption}\label{assum:diff_log} Covariate balance under logistic regression: 
	$$
	n^{-1/2}{\sum_{i=1}^n{\left(2T_i-1\right)	\Big(h(\mu+\bm{\beta}^t\bm{x}_i)-\mathbb{E}h(\mu+\bm{\beta}^t\bm{x}_i)\Big)}}
	\overset{d}{\rightarrow}{\eta},
	$$
	where ${\eta}$ is a random variable with 
	$\mathbb{E}[{\eta}]={0}$.
\end{assumption}

Compared to Assumption \ref{assum:diff_means}, a nonlinear form of the covariates $h(\mu+\bm{\beta}^t\bm{x}_i)$, instead of $\bm{x}_i$ itself, is assumed to be balanced.
The assumption is equivalent to Assumption \ref{assum:diff_means} if $h$ is the identity function, so it can be considered as an extension of Assumption \ref{assum:diff_means}.
We now give the main results under logistic regression.

\begin{theorem}
	\label{thm:TestStatLogistic}
	Under the working model \eqref{eq:working_model_log}, assume that Assumptions \ref{assum:prop} and 
	\ref{assum:diff_log} are satisfied. When $H_0: \mu_1=\mu_2=\mu$, then
	$
	S_L \overset{d}{\rightarrow} \lambda_1^{'} Z + \lambda_2^{'} {\eta},
	$
	where $Z$ is a standard normal random variable that is independent of ${\eta}$, $\lambda_1^{'}=\sqrt{\mathbb{E}[\textup{Var}(Y_i \mid \bm{x}_i)]/\textup{Var}(Y_i)}$, and $\lambda_2^{'} =\sqrt{1/\textup{Var}(Y_i)}$.
\end{theorem}

Similar to Theorem \ref{thm:TestStatDistGeneral}, the asymptotic distribution of $S_L$ under $H_0$ is the sum of two independent random variables, and the second component depends upon which specific randomization procedure is used.
Under CR, it is easy to see that $\eta \sim N(0, \textup{Var}[\mathbb{E}(Y_i \mid 
\bm{x}_i)])$, so $S_L$ is asymptotically standard normal and the test is valid.
Under CAR, the explicit form of $\eta$ is usually unknown, but it is anticipated that $\eta$ is more concentrated around zero than CR, which makes the test conservative.

To numerically evaluate type I errors under various randomization procedures, we conduct simulations as in Section \ref{sec:SimulationConservativeTest} by assuming a logistic regression model with $\mu_1=\mu_2=0$.
The results are listed in Table \ref{tab:conservative_size_log} and show patterns similar to those in the linear models.
The type I errors are valid for all working models under CR, whereas the tests are conservative under CAR if some covariates are omitted.

In contrast to the linear models, the unknown form of $\eta$ poses difficulties in the correction of type I errors using the theoretical distribution given in Theorem \ref{thm:TestStatLogistic}.
The properties of $\eta$, which describes the asymptotic behavior of the nonlinear quantity specified in Assumption \ref{assum:diff_log}, are usually unclear, in part because many CAR procedures are developed under the linear model framework, such as Atkinson's $D_A$-BCD and PSR.
It is thus desirable that a randomization procedure can produce a known form of $\eta$ with minimum variance, but this is beyond the scope of our discussion.
It is also difficult to derive the asymptotic distribution of $S_L$ under the alternative hypothesis.
These problems are left for future studies.

\begin{table}
	\begin{center}
		\begin{tabular}{c c c c c}
			\hline
			Randomization & W1 & W2 & W3 & W4 \\
			
			\hline
			CR      & 0.050 & 0.051 & 0.049 & 0.050\\
			RR      & 0.021 & 0.025 & 0.033 & 0.047\\
			$D_A$-BCD   & 0.014 & 0.022 & 0.030 & 0.049\\
			PSR     & 0.012 & 0.018 & 0.030 & 0.050\\
			\hline
		\end{tabular}
		\caption{Type I error of traditional test for treatment effect under 
			various working models and randomization procedures.}
		\label{tab:conservative_size_log}
	\end{center}
\end{table}

\section{Conclusion}\label{sec:Conclusion}

In this article, the properties of statistical inference are investigated for general CAR procedures.
The properties are illustrated and applied to several examples, including RR, PSR, and $D_A$-BCD.
A new test is also proposed that can effectively maintain the type I error and improve the power under CAR.
This work provides a theoretical foundation for the analysis of covariate-adaptive randomized experiments.

The linear models with continuous outcomes are assumed in our proposed framework.
However, in practice, it is common to see other types of outcomes.
In Section \ref{sec:Logistic}, we briefly explore the extension to the logistic regression models under which the traditional Wald test is studied.
It is desirable to extend the framework in this article to other nonlinear models, such as generalized linear models and survival analyses \citep{Shao2013, Luo2016, Xu2016}.
It is also of interest to generalize the results to the cases of unequal allocation or multiple treatments \citep{Tymofyeyev2007}.

We focus mainly on covariate-adaptive designs that aim to achieve balance on the covariates.
Such balanced designs are commonly used in practice because it is more convincing to attribute any observed effect to the treatment being tested.
Other covariate-adaptive designs, especially some Atkinson-type designs, are also proposed to increase efficiency.
Under homogeneous linear models, it has been shown that a balanced design is associated with efficiency or high power.
However, covariate balance is not equivalent to efficiency under heteroscedastic nonlinear models and may even reduce the efficiency in some cases \citep{Rosenberger2015}. 
Efficiency should thus be considered when extending the proposed framework to nonlinear models with unbalanced designs. 

\section*{Supplementary Materials}
The supplementary materials contain the proofs of the main theorems, additional simulation results, and the R code.

\bibliographystyle{apalike}
\bibliography{YichenBib}

\begin{thebibliography}{}

\bibitem[Antognini and Zagoraiou, 2011]{Antognini2011}
Antognini, A.~B. and Zagoraiou, M. (2011).
\newblock The covariate-adaptive biased coin design for balancing clinical
  trials in the presence of prognostic factors.
\newblock {\em Biometrika}, 98(3):519--535.

\bibitem[Atkinson, 1982]{Atkinson1982}
Atkinson, A.~C. (1982).
\newblock Optimum biased coin designs for sequential clinical trials with
  prognostic factors.
\newblock {\em Biometrika}, 69:61--67.

\bibitem[Birkett, 1985]{Birkett1985}
Birkett, N.~J. (1985).
\newblock Adaptive allocation in randomized controlled trials.
\newblock {\em Controlled clinical trials}, 6(2):146--155.

\bibitem[Bugni et~al., 2018]{Bugni2018}
Bugni, F.~A., Canay, I.~A., and Shaikh, A.~M. (2018).
\newblock Inference under covariate-adaptive randomization.
\newblock {\em Journal of the American Statistical Association},
  113(524):1784--1796.

\bibitem[Chandler and Kapelner, 2013]{Chandler2013}
Chandler, D. and Kapelner, A. (2013).
\newblock Breaking monotony with meaning: Motivation in crowdsourcing markets.
\newblock {\em Journal of Economic Behavior \& Organization}, 90:123--133.

\bibitem[Charbonnel et~al., 2005]{Charbonnel2005}
Charbonnel, B., Matthews, D., Schernthaner, G., Hanefeld, M., Brunetti, P., and
  {QUARTET Study Group} (2005).
\newblock A long-term comparison of pioglitazone and gliclazide in patients
  with {T}ype 2 diabetes mellitus: a randomized, double-blind, parallel-group
  comparison trial.
\newblock {\em Diabetic Medicine}, 22(4):399--405.

\bibitem[Efron, 1971]{Efron1971}
Efron, B. (1971).
\newblock Forcing a sequential experiment to be balanced.
\newblock {\em Biometrika}, 58(3):403--417.

\bibitem[EMA, 2015]{EMA2015}
EMA (2015).
\newblock Guideline on adjustment for baseline covariates in clinical trials.
\newblock {\em {European Medicines Agency}}.

\bibitem[Forsythe, 1987]{Forsythe1987}
Forsythe, A.~B. (1987).
\newblock Validity and power of tests when groups have been balanced for
  prognostic factors.
\newblock {\em Computational Statistics \& Data Analysis}, 5(3):193--200.

\bibitem[Frane, 1998]{Frane1998}
Frane, J.~W. (1998).
\newblock A method of biased coin randomization, its implementation, and its
  validation.
\newblock {\em Drug Information Journal}, 32(2):423--432.

\bibitem[Horton et~al., 2011]{Horton2011}
Horton, J.~J., Rand, D.~G., and Zeckhauser, R.~J. (2011).
\newblock The online laboratory: conducting experiments in a real labor market.
\newblock {\em Experimental Economics}, 14(3):399--425.

\bibitem[Hu et~al., 2014]{Hu2014}
Hu, F., Hu, Y., Ma, Z., and Rosenberger, W.~F. (2014).
\newblock Adaptive randomization for balancing over covariates.
\newblock {\em Wiley Interdisciplinary Reviews: Computational Statistics},
  6(4):288--303.

\bibitem[Hu and Hu, 2012]{Hu2012}
Hu, Y. and Hu, F. (2012).
\newblock Asymptotic properties of covariate-adaptive randomization.
\newblock {\em Annals of Statistics}, 40(3):1794--1815.

\bibitem[{ICH E9}, 1998]{ICH1998}
{ICH E9} (1998).
\newblock Statistical principles for clinical trials.
\newblock {\em {International Conference on Harmonisation}}.

\bibitem[Kahan et~al., 2014]{Kahan2014}
Kahan, B.~C., Jairath, V., Dor{\'e}, C.~J., and Morris, T.~P. (2014).
\newblock The risks and rewards of covariate adjustment in randomized trials:
  an assessment of 12 outcomes from 8 studies.
\newblock {\em Trials}, 15(1):139.

\bibitem[Lin and Su, 2012]{Lin2012}
Lin, Y. and Su, Z. (2012).
\newblock Balancing continuous and categorical baseline covariates in
  sequential clinical trials using the area between empirical cumulative
  distribution functions.
\newblock {\em Statistics in medicine}, 31(18):1961--1971.

\bibitem[Luo et~al., 2016]{Luo2016}
Luo, X., Li, M., Xu, G., and Tu, D. (2016).
\newblock Survival analysis following dynamic randomization.
\newblock {\em Contemporary Clinical Trials Communications}, 3:39--47.

\bibitem[Ma et~al., 2015]{Ma2015}
Ma, W., Hu, F., and Zhang, L. (2015).
\newblock Testing hypotheses of covariate-adaptive randomized clinical trials.
\newblock {\em Journal of the American Statistical Association},
  110(510):669--680.

\bibitem[Ma and Hu, 2013]{Ma2013}
Ma, Z. and Hu, F. (2013).
\newblock Balancing continuous covariates based on kernel densities.
\newblock {\em Contemporary clinical trials}, 34(2):262--269.

\bibitem[Morgan and Rubin, 2012]{Morgan2012}
Morgan, K.~L. and Rubin, D.~B. (2012).
\newblock Rerandomization to improve covariate balance in experiments.
\newblock {\em Annals of Statistics}, 40(2):1263--1282.

\bibitem[Pocock and Simon, 1975]{Pocock1975}
Pocock, S.~J. and Simon, R. (1975).
\newblock Sequential treatment assignment with balancing for prognostic factors
  in the controlled clinical trial.
\newblock {\em Biometrics}, 31(1):103--115.

\bibitem[Qin et~al., 2018]{Qin2016}
Qin, Y., Li, Y., Ma, W., and Hu, F. (2018).
\newblock {Pairwise sequential randomization and its properties}.
\newblock {\em arXiv:1611.02802v2}.

\bibitem[Rosenberger and Lachin, 2015]{Rosenberger2015}
Rosenberger, W.~F. and Lachin, J.~M. (2015).
\newblock {\em Randomization in clinical trials: theory and practice}.
\newblock John Wiley \& Sons, 2nd edition.

\bibitem[Shao and Yu, 2013]{Shao2013}
Shao, J. and Yu, X. (2013).
\newblock Validity of tests under covariate-adaptive biased coin randomization
  and generalized linear models.
\newblock {\em Biometrics}, 69(4):960--969.

\bibitem[Shao et~al., 2010]{Shao2010}
Shao, J., Yu, X., and Zhong, B. (2010).
\newblock A theory for testing hypotheses under covariate-adaptive
  randomization.
\newblock {\em Biometrika}, 97(2):347--360.

\bibitem[Simon, 1979]{Simon1979}
Simon, R. (1979).
\newblock Restricted randomization designs in clinical trials.
\newblock {\em Biometrics}, pages 503--512.

\bibitem[Smith, 1984a]{Smith1984}
Smith, R.~L. (1984a).
\newblock Properties of biased coin designs in sequential clinical trials.
\newblock {\em Annals of Statistics}, 12(3):1018--1034.

\bibitem[Smith, 1984b]{Smith1984b}
Smith, R.~L. (1984b).
\newblock Sequential treatment allocation using biased coin designs.
\newblock {\em Journal of the Royal Statistical Society. Series B
  (Methodological)}, 46(3):519--543.

\bibitem[Sverdlov, 2015]{Sverdlov2015}
Sverdlov, O. (2015).
\newblock {\em Modern Adaptive Randomized Clinical Trials: Statistical and
  Practical Aspects}, volume~81.
\newblock CRC Press.

\bibitem[Tymofyeyev et~al., 2007]{Tymofyeyev2007}
Tymofyeyev, Y., Rosenberger, W.~F., and Hu, F. (2007).
\newblock Implementing optimal allocation in sequential binary response
  experiments.
\newblock {\em Journal of the American Statistical Association},
  102(477):224--234.

\bibitem[Xu et~al., 2016]{Xu2016}
Xu, Z., Proschan, M., and Lee, S. (2016).
\newblock Validity and power considerations on hypothesis testing under
  minimization.
\newblock {\em Statistics in medicine}, 35(14):2315--2327.

\end{thebibliography}

\end{document}